\newif\ifdraft
\newcolumntype{L}{>{$}l<{$}} 
\numberwithin{equation}{section}
\theoremstyle{plain}
\newcommand{\R}{\mathbb{R}}
\newcommand{\Z}{\mathbb{Z}}
\theoremstyle{definition}
\newtheorem{theorem}{Theorem}[section]
\newtheorem{lemma}[theorem]{Lemma}
\newtheorem{remark}[theorem]{Remark}
\newtheorem{eg}[theorem]{Example}
\newtheorem{thmIntro}{Theorem}
\newtheorem*{theorem*}{Theorem}
\newcommand{\nocontentsline}[3]{}
\newcommand{\tocless}[2]{\bgroup\let\addcontentsline=\nocontentsline#1{#2}\egroup}
\definecolor{newBlue}{HTML}{8611e8}
\newcommand{\hf}{\hspace{0.2cm}}
\newcommand{%
    \def\svgwidth{1\columnwidth}
    \import{./figures/}{.pdf_tex}
}[2][1]{%
    \def\svgwidth{#1\columnwidth}
    \import{./figures/}{#2.pdf_tex}
}
\begin{document}
\allowdisplaybreaks
\title{On Manifolds homeomorphic to spheres}

\author[S. Basu]{Somnath Basu}
\address{Department of Mathematics and Statistics, IISER Kolkata,\\ West Bengal, India}
\email{somnath.basu@iiserkol.ac.in}

\author[S. Prasad]{Sachchidanand Prasad}
\address{Department of Mathematics, IIIT Delhi\\ New Delhi, India}
\email{sachchidanand@iiitd.ac.in}

\subjclass[2020]{Primary: 57R70, 55M99, 55Q40; Secondary: 55R10}

\keywords{Morse-Bott function; Reeb's theorem; exotic spheres; projective spaces}

\begin{abstract}
    We prove a result analogous to Reeb's theorem in the context of Morse-Bott functions: if a closed, smooth manifold $M$ admits a Morse-Bott function having two critical submanifolds $S^k$ and $S^l$ ($k \neq l$), then $M$ has dimension $k+l+1$ and is homeomorphic to the standard sphere $S^{k+l+1}$ but not necessarily diffeomorphic to it. We also prove similar results for projective spaces over the real numbers, complex numbers and quaternions. 
\end{abstract}

\date{\today}
\maketitle

\frenchspacing 

\section{Introduction}\label{sec:introduction}
\hspace{0.3cm} Morse theory is a powerful tool in differential topology, historically used by Marston Morse to show how the critical points of a smooth function can describe the topology of a manifold. One of its important results is Reeb's theorem, proved by Georges Reeb \cite{Reeb46} in 1946, which says that a compact, smooth manifold with a Morse function having two critical points must be homeomorphic to a sphere~\cite[Theorem 4.1]{Mil63}. This result had been crucially utilized by Milnor in his landmark work on the discovery of exotic $7$-spheres~\cite{Mil56}. A study of Morse-Bott functions with two critical values on a closed surfaces has been studied in the context of Reeb's graph \cite[Proposition 4.1]{Gel21}. Eells-Kuiper \cite{EeKui62} analyzed Morse functions with three critical points and showed that such a connected manifold has the cohomology ring of a projective plane (real, complex, quaternion or octonion). 

\hspace{0.3cm} The critical points of a Morse function are isolated and non-degenerate. The condition of non-degeneracy has to be modified in geometric scenarios where functions have symmetries and critical sets are submanifolds (of varying dimensions). This sets the stage for the theory of Morse-Bott functions which was developed by Raoul Bott \cite{Bot59}. Let $M$ be a Riemannian manifold and $f:M\to\mathbb{R}$ be a smooth function. Let $\mathrm{Cr}(f) $ denotes the set of critical points of $f$. Let $N$ be any connected submanifold of $M$ such that $N\subseteq \mathrm{Cr} (f)$. If $\nu$ denotes the normal bundle of $N$, then for any point $p\in N$, we have a decomposition $T_p M = T_p N \oplus \nu _p$. Note that for $p \in N$, and $V \in T_p N,  W\in T_p M$, the Hessian vanishes, that is, $\mathrm{Hess}_p(f)(V,W) = 0 $. Therefore, $\mathrm{Hess} _p(f)$ is characterized by its restriction to $\nu _p$. The submanifold $N$ is said to be a non-degenerate critical submanifold of $f$ if for any $p \in N$, the Hessian restricted to $\nu_p$ is non-degenerate. The function $f: M\to \mathbb{R} $ is said to be Morse-Bott if $\mathrm{Cr} (f)$ is the disjoint union of connected non-degenerate submanifolds. Morse functions are the first examples of Morse-Bott functions. If $f : M \to \mathbb{R} $ is a Morse function and $\pi : E \to  M$ is any smooth fibre bundle, then the composition $\pi {\scriptstyle\circ} f: E \to  \mathbb{R}$ is a Morse-Bott function whose critical submanifolds are exactly the fibre over critical points of $f$.          

\hspace{0.3cm} Given Reeb's theorem, it is natural to ask: can we characterize the topology of a smooth manifold if it admits a Morse-Bott function with only two (connected) critical submanifolds? This question, though broad, invites us to explore how the topology of critical submanifolds impose constraints on the topology of the manifold. Formulating precise analogues of Reeb's theorem for Morse-Bott functions with two (or more) critical submanifolds remains an open terrain. In this article, we prove the following.

\begin{thmIntro}[\autoref{thm:forSpheres}, \autoref{eg:ht-square-v2}]\label{thm:introSphere}
    Let $M$ be a closed, smooth manifold of dimension $d$. Let $f$ be a Morse-Bott function on $M$ with only critical submanifolds $S^k=f^{-1}(-1)$ and $S^l=f^{-1}(1)$ with $k\neq l$. Then $d = k+l+1$ and $M$ is homeomorphic to $S^d$.
\end{thmIntro}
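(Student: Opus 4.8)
The plan is to use Morse--Bott theory to present $M$ as two disk bundles glued along a common boundary, to read off enough topology of that boundary to pin down $d$ and to identify $M$ as a homotopy sphere, and finally to invoke the topological Poincar\'e conjecture.

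First I would fix the local structure. Since $M$ is compact, $f$ attains its minimum and maximum at critical points, and as the only critical values are $-1$ and $1$, the connected critical submanifold $S^k=f^{-1}(-1)$ is a set of minima and $S^l=f^{-1}(1)$ a set of maxima. By the Morse--Bott normal form, a neighbourhood of $S^k$ is diffeomorphic to the total space of the normal disk bundle $D(\nu_k)$, where $\nu_k$ has rank $d-k$ (the Hessian is positive definite on $\nu_k$, so there is no negative part), and similarly $\nu_l$ has rank $d-l$. As there are no critical values in $(-1,0]$ or in $[0,1)$, the regions $\{f\le 0\}$ and $\{f\ge 0\}$ deformation retract onto these disk-bundle neighbourhoods, so
$$M \;=\; D(\nu_k)\cup_{W} D(\nu_l), \qquad W \;\coloneqq f^{-1}(0)\;\cong\; S(\nu_k)\;\cong\;S(\nu_l).$$
Thus $W$ is simultaneously the total space of a linear sphere bundle $S^{d-k-1}\hookrightarrow W\to S^k$ and of $S^{d-l-1}\hookrightarrow W\to S^l$; connectedness of the two critical submanifolds forces $k,l<d$ and in fact $d\ge \max(k,l)+2$, so both fibres are positive-dimensional and $W$ is connected. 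Throughout I take, without loss, $1\le k<l$, so $l\ge 2$.

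Next I would compute $H^*(W)$ from each sphere bundle via the Gysin sequence and compare. The Euler class of the first bundle lies in $H^{d-k}(S^k)$, which vanishes unless $d=2k$, and similarly for the second unless $d=2l$. When $d\neq 2k,2l$ both Gysin sequences split, so $W$ has the integral cohomology of $S^k\times S^{d-k-1}$ and also of $S^l\times S^{d-l-1}$; equating Poincar\'e polynomials gives $(1+t^k)(1+t^{d-k-1})=(1+t^l)(1+t^{d-l-1})$, hence $\{k,d-k-1\}=\{l,d-l-1\}$, and since $k\neq l$ this forces $d=k+l+1$. The borderline cases $d=2k$ and $d=2l$ are handled by the same comparison: a nonzero Euler class would make $W$ a rational homology $(d-1)$-sphere, which is incompatible with the nontrivial Poincar\'e polynomial produced by the \emph{other} bundle, so the Euler class again vanishes and the conclusion $d=k+l+1$ follows as before. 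I expect this to be the main obstacle: the bookkeeping around the Euler classes, together with the (ultimately forced) orientability of $\nu_k,\nu_l$ that is needed to run the integral Gysin sequence, is where the argument is most delicate; orientability is automatic once $k\ge 2$ because then $S^k$ is simply connected, and for $k=1$ a short direct comparison of the two bundle structures over $W$ rules out a non-orientable normal bundle.

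With $d=k+l+1$ established, the Euler class of $\nu_k$ now lives in $H^{l+1}(S^k)=0$, so $H^*(W;\mathbb{Z})\cong H^*(S^k\times S^l;\mathbb{Z})$, i.e. $\mathbb{Z}$ in degrees $0,k,l,k+l$ and $0$ otherwise; moreover the bundle projections $H_*(W)\to H_*(S^k)$ and $H_*(W)\to H_*(S^l)$ are split surjections. Applying van Kampen to $M=D(\nu_k)\cup_W D(\nu_l)$, and noting that $\pi_1(W)\to\pi_1(S^k)$ is surjective (the fibre $S^{d-k-1}=S^l$ is simply connected since $l\ge 2$, and $\pi_1(D(\nu_l))=\pi_1(S^l)=1$), gives $\pi_1(M)=1$. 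A Mayer--Vietoris computation for the same decomposition, using the two split surjections above, then shows $H_*(M;\mathbb{Z})\cong H_*(S^d;\mathbb{Z})$; in particular $M$ is orientable. Hurewicz and Whitehead now upgrade this: a degree-one map $S^d\to M$ representing a generator of $H_d(M)$ induces isomorphisms on all homology groups between simply connected spaces, hence is a homotopy equivalence, so $M$ is a homotopy $d$-sphere.

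Finally, since $1\le k<l$ forces $d=k+l+1\ge 4$, a homotopy $d$-sphere is homeomorphic to $S^d$ by Freedman's theorem when $d=4$ and by Smale's solution of the generalized Poincar\'e conjecture (via the $h$-cobordism theorem) when $d\ge 5$. This yields $M\cong S^d$, as claimed; smoothness of this homeomorphism is exactly what one should not expect, which is why the companion example produces manifolds homeomorphic but not diffeomorphic to $S^d$.
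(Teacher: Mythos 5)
Your architecture is the same as the paper's: decompose $M=D(\nu_{S^k})\cup_{M_0}D(\nu_{S^l})$ along $M_0=f^{-1}(0)$, exploit the two sphere-bundle structures on $M_0$ to force $d=k+l+1$, get $\pi_1(M)=1$ by van Kampen, show $M$ is a homology sphere by Mayer--Vietoris, and finish with Hurewicz plus Freedman/Smale. The two key computations, however, are done differently, and your versions are valid. For the dimension you compare Gysin sequences and Poincar\'e polynomials of the two bundle structures (including a correct treatment of the borderline cases $d=2k$, $d=2l$), whereas the paper compares long exact sequences of homotopy groups and invokes Serre's finiteness of $\pi_{d-l}(S^l)$. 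For the homology of $M$ you use that both projections $H_\ast(M_0)\to H_\ast(S^k)$ and $H_\ast(M_0)\to H_\ast(S^l)$ are onto (Gysin with vanishing Euler class), and since in each positive degree at most one of $H_n(S^k)$, $H_n(S^l)$ is nonzero, every Mayer--Vietoris map into the middle term is surjective; this kills $H_\ast(M)$ in degrees $0<n<d$ directly and is in fact cleaner than the paper, which uses a section of the bundle over $S^k$, the Serre spectral sequence, and a torsion-linking-duality argument to dispose of possible torsion in $H_l(M)$. The caveat in your route is that the integral Gysin sequence requires the sphere bundles to be orientable, and you need this \emph{before} you know $M$ is orientable (the paper sidesteps this by proving $\pi_1(M)=0$ first and only then orienting things). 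For $k\ge 2$ orientability is automatic; for $k=1$ you only gesture at ``a short direct comparison''. Such a comparison does exist --- a non-orientable $\nu_{S^1}$ makes $M_0$ the mapping torus of a degree $-1$ self-map of $S^{d-2}$, whose homology ($H_1\cong\mathbb{Z}$ together with a $\mathbb{Z}_2$ in degree $d-2$) is incompatible with $M_0$ also fibering over the simply connected $S^l$ with sphere fibre --- or, more simply, the entire dimension count can be run with $\mathbb{Z}_2$ coefficients, where Gysin needs no orientability; but as written this step is a sketch, not a proof, and you should also supply the short arguments for your unproved assertions that $l<d$ and $d\ge l+2$.

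The genuine gap is the case $k=0$, which the statement includes (this is why it cites \autoref{eg:ht-square-v2} alongside \autoref{thm:forSpheres}). Your reduction ``without loss, $1\le k<l$'' is not without loss: $S^0$ is disconnected, so the hypotheses you use from the first line on (connected critical submanifolds, connectedness of $M_0$, the van Kampen bookkeeping with $\pi_1(S^k)$, $d\geq 4$ at the end) all fail. The case needs its own, easy, argument: with $S^0=\{p,q\}=f^{-1}(-1)$, the level set $M_0$ is on one side $S^{d-1}\sqcup S^{d-1}$ (the boundaries of the two normal disks at $p$ and $q$) and on the other side the sphere bundle of $\nu_{S^l}$; disconnectedness of $M_0$ forces the fibre to be $S^0$, hence $d=l+1$ and the line bundle $\nu_{S^l}$ to be trivial, so $M$ is two $d$-disks glued to the ends of the cylinder $S^{d-1}\times[-1,1]$, i.e.\ a twisted sphere, which is homeomorphic to $S^d$ (note $d$ can now be $2$ or $3$, where one quotes surface classification or Perelman rather than Freedman/Smale). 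Add this case and the $k=1$ orientability fix, and your proof is complete.
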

\noindent The above result includes the case of $k=0$. \autoref{thm:introSphere} is sharp in the sense that the homeomorphism cannot, in general, be extended to a diffeomorphism. In forthcoming joint work of the second-named author with A. Bhowmick and T. Schick, it is shown that for a compact manifold $M$, given two disjoint, closed, connected, embedded submanifolds $N_1$ and $N_2$, there exists a Riemannian metric $g$ on $M$ such that the cut locus of $N_1$ is exactly $N_2$ and vice versa, if and only if there exists a Morse-Bott function $f : M \to \mathbb{R}$ with exactly two critical submanifolds, $N_1$ and $N_2$. Furthermore, they show that on every exotic sphere $\Sigma^d$, there exists a Riemannian metric $g$ and disjoint smooth embeddings $S^k, S^l \hookrightarrow \Sigma^d$ with $d = k + l + 1$ such that the embedded $S^k$ is precisely the cut locus of $S^l$. We have also indicated a more explicit example (see \autoref{eg:sigma7-as-join}).

\hspace{0.3cm} \autoref{thm:introSphere} need not be true if we assume $ k =l $; see the example of the height function on a torus lying horizontally (\autoref{fig:torusOnHorizontalPlane}). For $k = l$, in a more general sense, Lerario, Meroni and Zuddas \cite{LeMeZu24} have shown the following:
\begin{theorem*}\cite[Corollary 16]{LeMeZu24}
    Let $M$ be a smooth closed connected orientable manifold of dimension $n\geq 6$, and let $1 \leq k < n$. Further assume that $f: M \to  \mathbb{R} $ be a smooth function with two critical submanifolds, each being $S^k$. If both $S^k$'s have trivial normal bundle, then $M$ is obtained by gluing two copies of $S^k \times \mathbb{D}^{n-k} $ along their boundaries $S^k \times S^{n-k-1} = \partial (S^k \times \mathbb{D}^{n-k} )$ with a self-diffeomorphism.
\end{theorem*}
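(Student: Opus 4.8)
The plan is to run the Morse--Bott handle decomposition attached to $f$ and read off the two disk bundles directly, so I first pin down which critical submanifold plays which role. Since $M$ is closed, $f$ attains a global minimum and a global maximum, both of which are critical values; moreover $f$ is constant along each connected critical submanifold, because $df$ vanishes there. Hence the two copies of $S^k$ carry the two extreme values: I write $N_- = f^{-1}(a)$ for the minimum set and $N_+ = f^{-1}(b)$ for the maximum set with $a < b$, so that $f$ has no critical points on the open band $f^{-1}(a,b)$.

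Next I would describe a neighbourhood of each extremum via the Morse--Bott normal form. Near the minimum $N_-$ the function is, in suitable fibre coordinates, a positive-definite quadratic form on the normal bundle $\nu(N_-)$, since $N_-$ is a minimum and its negative normal bundle is trivial. Consequently, for small $\epsilon > 0$, the sublevel set $f^{-1}[a, a+\epsilon]$ is diffeomorphic to the normal disk bundle $D\nu(N_-)$, with regular boundary $f^{-1}(a+\epsilon)$ equal to the associated sphere bundle. The hypothesis that $\nu(N_-)$ is trivial of rank $n-k$ then gives $f^{-1}[a, a+\epsilon] \cong S^k \times \mathbb{D}^{n-k}$ and $f^{-1}(a+\epsilon) \cong S^k \times S^{n-k-1}$. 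The identical argument at the maximum produces $f^{-1}[b-\epsilon, b] \cong S^k \times \mathbb{D}^{n-k}$, again with boundary $S^k \times S^{n-k-1}$.

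It then remains to glue the two pieces. On the critical-point-free band $f^{-1}[a+\epsilon, b-\epsilon]$, integrating a gradient-like vector field for $f$ yields a diffeomorphism $f^{-1}[a+\epsilon, b-\epsilon] \cong f^{-1}(a+\epsilon) \times [a+\epsilon, b-\epsilon]$, and in particular a diffeomorphism $\phi \colon f^{-1}(a+\epsilon) \xrightarrow{\ \cong\ } f^{-1}(b-\epsilon)$ between two copies of $S^k \times S^{n-k-1}$. Writing $M = f^{-1}[a, a+\epsilon] \cup f^{-1}[a+\epsilon, b-\epsilon] \cup f^{-1}[b-\epsilon, b]$ and absorbing the product band into a collar of the boundary, I would exhibit $M$ as two copies of $S^k \times \mathbb{D}^{n-k}$ glued along $S^k \times S^{n-k-1}$ by the self-diffeomorphism obtained from $\phi$ composed with the two boundary trivializations, exactly as claimed.

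I expect the substantive step to be the second one: promoting an abstract neighbourhood of the critical sphere to its normal disk bundle and trivializing it as $S^k \times \mathbb{D}^{n-k}$. If $f$ is Morse--Bott this is precisely the normal-form theorem, and the remaining work is bookkeeping; the only real care needed is to record the gluing as a genuine self-diffeomorphism of $S^k \times S^{n-k-1}$, rather than merely a diffeomorphism between two differently trivialized copies. If instead one only assumes $f$ smooth with $S^k$ as its critical submanifolds, the extrema may be degenerate, the sublevel set $f^{-1}[a, a+\epsilon]$ need not retract onto $N_-$, and the disk-bundle structure must be built by hand; it is presumably here that the hypotheses $n \geq 6$ and orientability enter, certifying the two complementary pieces as trivial disk bundles through high-dimensional handle-theoretic techniques, whereas the Morse--Bott argument above needs neither hypothesis.
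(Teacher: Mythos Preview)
Your argument is correct and is precisely the ``standard'' Morse--Bott proof the paper has in mind: the paper does not give a detailed proof of this cited result but simply remarks that for a Morse--Bott function the proof is standard, pointing to the observation (Remark~\ref{LMZ}) that trivial normal bundles make the two disk-bundle pieces $S^k\times\mathbb{D}^{n-k}$ and the gluing hypersurface $M_0\cong S^k\times S^{n-k-1}$. Your three steps---identifying the two spheres as the extrema, trivializing the sublevel/superlevel sets via the Morse--Bott normal form, and absorbing the product cobordism coming from the gradient flow into a collar---are exactly this, and your closing paragraph correctly isolates where the extra hypotheses $n\geq 6$ and orientability would be needed in the merely-smooth setting of the original \cite{LeMeZu24} result.
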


\noindent The proof of the above for a Morse-Bott function is standard (see \autoref{LMZ}). 


\vspace{0.3cm}
\hspace{0.3cm}Hai Bao and Rees \cite{HaiRe92} analyzed the case of smooth function on a compact manifold $M$ whose critical set is the disjoint union of a point and a connected smooth submanifold of positive dimension. They proved (\cite{HaiRe92} Theorem (1)) that $M$ has the cohomology ring structure of a projective space and the cohomology of the critical submanifold corresponds to a codimension one projective subspace. There is no assumption on the function $f$ being Morse-Bott. We prove a variant of \autoref{thm:introSphere}, adapted to projective spaces. This also generalizes the result of Hai Bao and Rees in the context of Morse-Bott functions. As a combination of \Cref{thm:forComplexProjective,thm:forRealProjective,thm:forQuaternionicProjective} we have the following result.

\begin{thmIntro}\label{thm:introProjective}
    Let $M$ be a closed smooth manifold of dimension $d$. Let $\mathbb{K} = \mathbb{R}, \mathbb{C}, \mathbb{H}$ and let $\mathbb{KP}^n$ denote the projective space over $\mathbb{K}$ of real dimension $n\dim_{\mathbb{R}} \mathbb{K}$. Let $f$ be a Morse-Bott function on $M$ with exactly two critical submanifolds, $\mathbb{KP}^k$ and $\mathbb{KP}^l$ such that $k < l$. Then the dimension of $M$ is $d =(k + l + 1)\dim_{\mathbb{R}} \mathbb{K}$, and $M$ is homotopy equivalent to $\mathbb{KP}^{k+l+1}$.
\end{thmIntro}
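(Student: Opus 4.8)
The plan is to adapt the sphere argument of \autoref{thm:introSphere} to the projective setting, replacing cohomology-sphere input with cohomology-projective-space input, and then bootstrap from a cohomology computation to a homotopy equivalence. Write $m = \dim_{\mathbb{R}}\mathbb{K}\in\{1,2,4\}$, so that $\mathbb{KP}^k$ has real dimension $km$. First I would set up the standard Morse-Bott handle decomposition: normalizing $f$ so that $\mathbb{KP}^k=f^{-1}(\min)$ and $\mathbb{KP}^l=f^{-1}(\max)$ with no other critical values, the negative-gradient flow exhibits $M$ as the union of the two disk bundles $D(\nu_k)$ and $D(\nu_l)$ of the normal bundles of the two critical submanifolds, glued along a common boundary hypersurface $L$ which is simultaneously the sphere bundle $S(\nu_k)\to\mathbb{KP}^k$ and $S(\nu_l)\to\mathbb{KP}^l$. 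In particular $L$ is the cut locus of either critical submanifold, and $M\setminus \mathbb{KP}^l$ deformation retracts onto $\mathbb{KP}^k$ (and symmetrically). From $\dim D(\nu_k) = km + \mathrm{rk}_{\mathbb{R}}\nu_k = d$ and the corresponding identity for $l$, together with the dimension count forced by $L$ being a sphere bundle of both normal bundles, I would extract $\mathrm{rk}\,\nu_k = d-km$ and $\mathrm{rk}\,\nu_l = d-lm$; the fibre-dimension bookkeeping on $L$ then yields $(d-km-1)+lm$-type relations giving $d = (k+l+1)m$. (The clean way to see this: $L$ fibres over $\mathbb{KP}^k$ with fibre $S^{d-km-1}$ and over $\mathbb{KP}^l$ with fibre $S^{d-lm-1}$; comparing, e.g., via a Gysin/Leray-Hirsch argument on the rational cohomology of $L$, pins down $d$.)

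Next I would run the Mayer-Vietoris (or the two Gysin sequences for $S(\nu_k)\hookrightarrow D(\nu_k)$ and $S(\nu_l)\hookrightarrow D(\nu_l)$, glued along $L$) to compute $H^*(M;\mathbb{Z})$. Using that $H^*(\mathbb{KP}^k)$ and $H^*(\mathbb{KP}^l)$ are truncated polynomial rings on a generator in degree $m$ (for $\mathbb{K}=\mathbb{C},\mathbb{H}$; degree $1$ with $\mathbb{Z}/2$ coefficients, or the appropriate integral statement, for $\mathbb{K}=\mathbb{R}$), the Gysin sequence for the sphere bundle $L\to\mathbb{KP}^k$ with Euler class $e_k\in H^{d-km}(\mathbb{KP}^k)$ forces, by comparing ranks with the other side, that $e_k$ is (up to sign/unit) the bottom nonzero power of the generator, i.e. the normal bundle is ``as twisted as possible.'' Feeding this back, $H^*(M)$ comes out as the truncated polynomial ring matching $H^*(\mathbb{KP}^{k+l+1})$, including the ring structure (the cup product across the gluing is what produces the top classes). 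This is the step that most directly generalizes Hai Bao--Rees, and where I would be most careful with the $\mathbb{K}=\mathbb{R}$ case: there one works mod $2$ for the ring structure, and separately tracks integral/orientation data, since $\mathbb{RP}^n$ is non-orientable for $n$ even.

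Finally, to upgrade ``same cohomology ring'' to ``homotopy equivalent to $\mathbb{KP}^{k+l+1}$'', I would build a map $M\to \mathbb{KP}^{k+l+1}$ realizing the generator. Since the attaching structure shows $M$ has a CW structure with one cell in each degree $0,m,2m,\dots,(k+l+1)m$ (for $\mathbb{C},\mathbb{H}$), $M$ is simply connected (for $\mathbb{C},\mathbb{H}$; for $\mathbb{R}$ one has $\pi_1=\mathbb{Z}/2$ and argues with the universal cover or directly cell-by-cell), so a generator of $H^m(M)\cong\mathbb{Z}$ is represented by a map $g\colon M\to \mathbb{KP}^\infty = K$-type space, which (by cellular approximation and the cell structure, dimension $d=(k+l+1)m$) factors through $\mathbb{KP}^{k+l+1}$; the induced map on cohomology is then an isomorphism in every degree because it sends generator to generator and both rings are generated in degree $m$, so by Whitehead's theorem it is a homotopy equivalence. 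For $\mathbb{K}=\mathbb{R}$, I would instead map into $\mathbb{RP}^\infty=K(\mathbb{Z}/2,1)$ via the generator of $H^1(M;\mathbb{Z}/2)$, note it factors through $\mathbb{RP}^{k+l+1}$ up to the dimension $d$, and check it is a $\mathbb{Z}/2$-cohomology isomorphism, hence (both spaces having finite fundamental group $\mathbb{Z}/2$ acting trivially, or passing to universal covers $S^d\to S^d$) a homotopy equivalence.

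The main obstacle I anticipate is the cohomology computation pinning down the Euler classes of the two normal bundles: one must rule out ``degenerate'' possibilities for $e_k, e_l$ and show the unique consistent choice forces the projective-space cohomology ring. Everything hinges on the interplay of the two Gysin sequences glued along $L$ — the dimension identity $d=(k+l+1)m$, the ring structure of $H^*(M)$, and ultimately the classifying map all follow once the Euler classes are identified. The $\mathbb{K}=\mathbb{R}$ case requires extra bookkeeping (mod $2$ coefficients for the ring, orientation subtleties, $\pi_1=\mathbb{Z}/2$), but no new idea; the octonionic case is excluded precisely because there is no $\mathbb{OP}^{k+l+1}$ for $k+l+1>2$, which is why the statement is restricted to $\mathbb{K}=\mathbb{R},\mathbb{C},\mathbb{H}$.
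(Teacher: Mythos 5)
For $\mathbb{K}=\mathbb{C},\mathbb{H}$ your outline is essentially the paper's proof of \autoref{thm:forComplexProjective} and \autoref{thm:forQuaternionicProjective}: decompose $M$ into the two disk bundles glued along $M_0$, pin down $d$ by playing the two sphere fibrations on $M_0$ against each other, compute $H^\bullet(M)$ via Gysin/Mayer--Vietoris plus Poincar\'e duality, and finish with a map to $\mathbb{KP}^\infty$ representing the degree-$\dim_{\mathbb{R}}\mathbb{K}$ generator, cellular approximation, and Whitehead's theorem. Two caveats: the dimension step is where most of the work lies (in \autoref{step-1CPn} the paper first rules out odd $d$ by a spectral sequence argument and then runs the homotopy exact sequences of both fibrations, using the section available once $d-2k>2k$), so ``a Gysin/Leray--Hirsch comparison pins down $d$'' is a placeholder rather than an argument; and simple connectivity of $M$ should come from Seifert--van Kampen applied to the two-bundle cover, not from an asserted minimal CW structure, which the Morse--Bott decomposition does not directly provide.

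The genuine gap is in your $\mathbb{K}=\mathbb{R}$ endgame. A map $g\colon M\to\mathbb{RP}^{k+l+1}$ inducing isomorphisms on $\pi_1$ and on mod $2$ cohomology need not be a homotopy equivalence, because mod $2$ data cannot detect odd torsion in the universal cover. For instance, let $N$ be the total space of an $S^3$-bundle over $S^4$ with odd Euler number $n>1$, so $N$ is simply connected with $H_3(N)\cong\mathbb{Z}/n\mathbb{Z}$ but is a $\mathbb{Z}_2$-homology sphere; the collapse map $\mathbb{RP}^7\#N\to\mathbb{RP}^7$ is an isomorphism on $\pi_1$ and on $\mathbb{Z}_2$-cohomology, yet is not a homotopy equivalence. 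So after your mod $2$ computation you must still show that the universal cover $\widetilde M$ is an integral homology (hence homotopy) sphere, and neither mod $2$ Poincar\'e duality nor the classifying-map argument supplies this. The paper closes exactly this hole by a different route (\autoref{thm:forRealProjective}, \autoref{step-1RPn}, \autoref{step-2RPn}): it shows $\pi_1(M)\cong\mathbb{Z}_2$ and that the preimages of the two critical submanifolds in $\widetilde M$ are $S^k$ and $S^l$, so the lifted function is again Morse--Bott with spherical critical set; \autoref{thm:forSpheres} then gives $\widetilde M\cong S^{k+l+1}$ (this is why the real case crucially uses Theorem A), the Gysin sequence of the double cover yields the mod $2$ ring of $M$, and the homotopy equivalence with $\mathbb{RP}^{k+l+1}$ follows from the Hirsch--Milnor result on free involutions of spheres \cite{HirMil64}. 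If you want to keep your direct approach, you would need to supplement it with an integral (or at least odd-primary) computation of $H_\bullet(\widetilde M)$ --- at which point you are essentially reproving Theorem A for the cover.
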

The method of proof for the complex and quaternionic cases (\autoref{thm:forComplexProjective} and \autoref{thm:forQuaternionicProjective} respectively) are identical and do not use \autoref{thm:introSphere}. The case of real projective spaces (\autoref{thm:forRealProjective}) requires a different method and crucially uses \autoref{thm:introSphere}. It is important to note that the dimension of $M$ is a consequence of the hypothesis in both \autoref{thm:introSphere} and \autoref{thm:introProjective}. For instance, a slightly different proof of \autoref{thm:introProjective} for the real case, with the added assumption of $d=k+l+1$ while dropping the requirement of non-degeneracy of $f$ at the critical sets, is given in \cite{HaiRe92}, Corollary 2 following Theorem (2) on page 144. As in Theorem A, without $k=l$, Theorem B fails, i.e., $M$ need not be of required dimension or homotopy equivalent to a projective space, even if the dimension is assumed to be the correct one. Moreover, the homotopy equivalence cannot be improved to homeomorphism.

\section{The case of spheres}
\begin{theorem}\label{thm:forSpheres}
    Let $M$ be a closed, smooth manifold of dimension $d$. Let $f:M\to \mathbb{R}$ be a Morse-Bott function with two connected, critical submanifolds $S^k$ and $S^l$, of unequal dimension. Then $d = k+l+1$ and $M$ is homeomorphic to $S^d$.
\end{theorem}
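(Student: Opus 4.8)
First I would use compactness of $M$: the function $f$ attains its minimum and maximum, and the corresponding level sets are unions of critical submanifolds. Since $\mathrm{Cr}(f)=S^k\sqcup S^l$ with $f\equiv -1$ on $S^k$ and $f\equiv 1$ on $S^l$, it follows that $S^k=f^{-1}(-1)$ is the minimum (Morse--Bott index $0$) and $S^l=f^{-1}(1)$ the maximum (index equal to its codimension), so every value in $(-1,1)$ is regular; also $M$ must be connected (otherwise it would split as $S^k\sqcup S^l$, not a manifold of pure dimension $d$). Writing $\nu_k,\nu_l$ for the normal bundles (ranks $d-k$ and $d-l$), standard Morse--Bott theory (the Morse--Bott lemma together with the gradient flow, cf.\ \cite{Bot59}) identifies a neighbourhood of $S^k$ with the disk bundle $D(\nu_k)$, a neighbourhood of $S^l$ with $D(\nu_l)$, and the region between two regular levels with a product; hence
\[
M\;\cong\;D(\nu_k)\,\cup_\phi\,D(\nu_l)
\]
glued along a common boundary $V:=f^{-1}(0)$, a closed $(d-1)$-manifold that is simultaneously the sphere bundle of $\nu_k$ --- an $S^{\,d-k-1}$-bundle over $S^k$ --- and the sphere bundle of $\nu_l$ --- an $S^{\,d-l-1}$-bundle over $S^l$.

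\textbf{Pinning down the dimension.} The heart of the argument is to show $d=k+l+1$, equivalently that the two fibre dimensions are $d-k-1=l$ and $d-l-1=k$. I would compute $H^*(V;\mathbb{Z}/2)$ from each of the two fibrations via the Serre spectral sequence. Over $\mathbb{Z}/2$, a sphere bundle $S^a\hookrightarrow W\to S^b$ with $a,b\ge 1$ has Poincar\'e polynomial either $(1+t^a)(1+t^b)$ or --- only when $b=a+1$ --- that of $S^{a+b}$; and $(1+t^a)(1+t^b)$ determines the unordered pair $\{a,b\}$ when $a,b\ge 1$. Matching the answers coming from the two fibrations of $V$ therefore gives $\{d-k-1,k\}=\{d-l-1,l\}$, and $k\ne l$ leaves only $d-k-1=l$. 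The ``$S^{a+b}$'' degeneration is excluded because it would force $d=2k$ from one side and $d=2l$ from the other; the remaining degenerate situations are handled by hand, namely $k=0$ (where $V=S^{d-1}\sqcup S^{d-1}$, $M=D^d\cup_\phi D^d$, and directly $d=l+1$ and $M\cong S^d$) and the case of a $0$-dimensional fibre (which cannot occur when $k,l\ge 1$, since then $V$ would be connected from one side and disconnected from the other). This comparison also shows $V$ has the $\mathbb{Z}/2$-, and (once $\pi_1(M)=1$ below, by collapse of the Serre spectral sequence without extension problems) integral, homology of $S^k\times S^l$.

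\textbf{Homotopy type and conclusion.} With $d=k+l+1$, van Kampen applied to $M\simeq S^k\cup_V S^l$ yields $\pi_1(M)=1$: when $\min(k,l)\ge 2$ this is immediate, when $\min(k,l)=1$ the homotopy sequence of the $S^l$-bundle over $S^1$ shows $\pi_1(V)\to\pi_1(S^1)$ is onto, and $k=0$ was settled above. Hence $M$ is orientable. Running Mayer--Vietoris for $M=D(\nu_k)\cup D(\nu_l)$ with $D(\nu_k)\simeq S^k$, $D(\nu_l)\simeq S^l$, $D(\nu_k)\cap D(\nu_l)\simeq V$, and using the Gysin sequences of the two sphere bundles --- which give $H_*(V;\mathbb{Z})\cong H_*(S^k\times S^l)$ and show the projections $V\to S^k$ and $V\to S^l$ carry the degree-$k$ and degree-$l$ generators isomorphically --- one checks that all reduced homology of $M$ in degrees $<d$ cancels, so $M$ is a simply connected integral homology $d$-sphere. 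By Hurewicz and Whitehead $M\simeq S^d$, and by the topological generalized Poincar\'e conjecture (Smale for $d\ge 5$, Freedman for $d=4$, Perelman for $d=3$, classical for $d\le 2$), $M$ is homeomorphic to $S^d$.

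\textbf{Main obstacle.} The delicate step is the dimension count: one must control the cohomology of a manifold carrying two a priori unrelated sphere-bundle structures over spheres, correctly quarantining the orientability issue (circumvented by working mod $2$), the Hopf-type collapse of the spectral sequence, and the disconnected/low-dimensional fibre cases. Everything afterwards --- the identification of the homotopy type and the passage to a homeomorphism --- is a routine Mayer--Vietoris and Gysin computation followed by an appeal to the Poincar\'e conjecture.
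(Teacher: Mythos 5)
Your proposal is correct, and it follows the same global skeleton as the paper --- normalize $f$, decompose $M\cong D(\nu_{S^k})\cup_{M_0}D(\nu_{S^l})$ with $M_0=f^{-1}(0)$ carrying two sphere-bundle structures $\mathcal{F}_k,\mathcal{F}_l$, then prove the dimension count, simple connectivity, and that $M$ is a homology sphere, and finish with Hurewicz and the generalized Poincar\'e conjecture --- but your treatment of the crucial dimension step is genuinely different. The paper (\autoref{M-dim}) runs the long exact homotopy sequences of the two fibrations, first establishing $d-k>k$ and $d-l\le l$, and then must invoke Serre's theorem on homotopy groups of spheres to handle the possibility that $\pi_{d-l}(S^l)$ surjects onto $\mathbb{Z}$ (the cases $d-l=l$ and $d-l=2l-1$ with $l$ even); your comparison of the mod $2$ Poincar\'e polynomials of $M_0$ via the two Serre spectral sequences is more elementary and bypasses that input, since over $\mathbb{Z}_2$ the monodromy is automatically trivial. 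Two points you should make explicit: the mixed situation (one spectral sequence degenerating to give $M_0\sim_{\mathbb{Z}_2}S^{d-1}$ while the other collapses) is excluded simply because the total mod $2$ Betti numbers would be $2$ versus $4$; and the $0$-dimensional-fibre case over $S^l$ is impossible because $k\ge 1$ forces $l\ge 2$, so the double cover of $S^l$ is disconnected while $\mathcal{F}_k$ makes $M_0$ connected (disconnectedness of a double cover is not automatic over a non-simply-connected base, so the hypothesis $l\ge2$ is doing real work here). The later steps differ only in technique: for $\pi_1$ the paper uses a section of $\mathcal{F}_k$ obtained from the splitting $\nu_{S^k}\cong\epsilon^{l+1-k}\oplus\xi^k$, while you use surjectivity of $\pi_1(M_0)\to\pi_1(S^1)$ from the fibration sequence, which is equivalent; for the homology of $M$ the paper uses the sectioned spectral sequence plus a linking/Poincar\'e duality argument to kill torsion in $H_l(M)$ when $l=k+1$, whereas your Gysin route works provided you observe that in this adjacent case the Euler class of $\nu_{S^l}$ must vanish because the $\mathcal{F}_k$-Gysin sequence forces $H^l(M_0)\cong\mathbb{Z}$, torsion-free, so the projection to $S^l$ is indeed an isomorphism in degree $l$. (Your parenthetical $k=0$ case lies outside the stated hypotheses, as $S^0$ is disconnected, and your description $M=D^d\cup_\phi D^d$ there is not quite accurate --- compare case (iv) of \autoref{eg:ht-square-v2} --- but nothing in your main argument depends on it.)
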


\noindent We note that $k\neq l$ is a necessary condition. For example, consider a horizontally placed torus $T^2$ in $\mathbb{R} ^3$ (see \autoref{fig:torusOnHorizontalPlane}). The height function
    \[
        h:T^2 \to \mathbb{R}, (x,y,z) \mapsto z
    \] 
    is a Morse-Bott function with critical submanifolds $S_{\text{T}}^1$, where $h$ attains its maximum, and $S_{\text{B}}^1$, where $h$ attains its minimum.
    \begin{figure}[H]
        \centering
    \def\svgwidth{0.85\columnwidth}
    \import{./figures/}{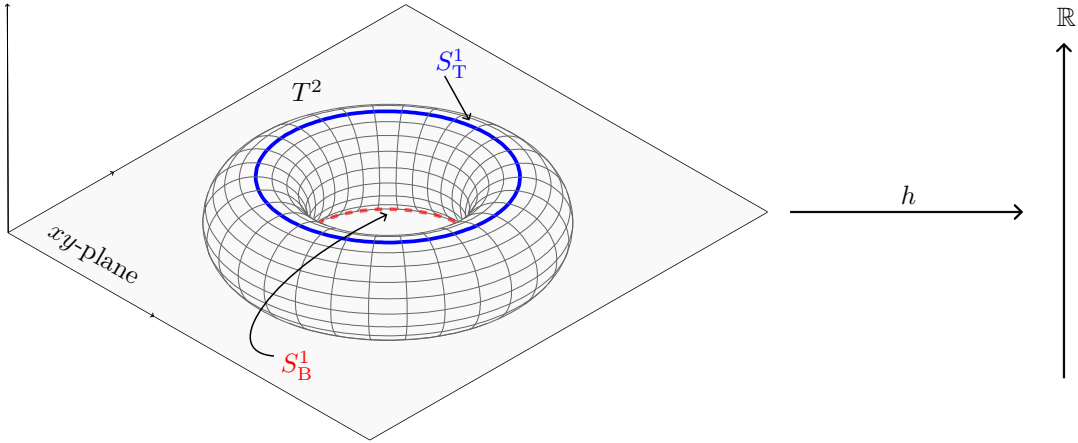}

        \caption{Height function on the torus that is placed horizontally}
        \label{fig:torusOnHorizontalPlane}
    \end{figure}

\hspace*{0.3cm} The following example shows the existence of such Morse-Bott function on the standard sphere with two spheres of unequal dimensions as critical submanifolds. It was motivated by the analysis of the cut locus of $S^k \hookrightarrow S^{k+l+1} $(see \cite[Example 2.7]{BaPr23}).
\begin{eg}\label{eg:morseBottFunctionOnSphere}
    Define a function 
    \[
        f: S^{k+l+1} \to  \mathbb{R} , \ (x_0, x_1, \dots, x_{k+l+1}) \mapsto -\sum_{i=0}^{k} x_i^2 + \sum_{i=k + 1}^{k + l + 1} x_i^2 = 1 - 2 \sum_{i=0}^{k} x_i^2
    \] 
    Then $f$ is a Morse-Bott function with critical submanifolds $S^k = f^{-1} (-1)$ and $S^l = f^{-1} (1)$. This example is a geometric realization of $S^{k+l+1}$ as the topological join of $S^k$ and $S^l$.
\end{eg}
\hf The following lemma is standard in Morse theory and follows by considering the negative gradient flow. We omit the proof.
\begin{lemma}\label{lem:morseDiffeo}
    Let $f: M \to \mathbb{R} $ be a smooth function on a manifold $M$. Let $[a,b] \subset \mathbb{R} $ be an interval which does not contain any critical values of $f$. If $f^{-1}[a,b]$ is compact, then for any $c \in [a,b]$, $f^{-1} [a,b]$ is diffeomorphic to $f^{-1} (c) \times [a,b]$.    
\end{lemma}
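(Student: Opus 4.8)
The plan is to build the diffeomorphism explicitly from the flow of a normalized gradient vector field; this is the standard ``first deformation lemma'' argument of Morse theory, adapted to produce a product decomposition rather than a deformation of sublevel sets. First I would fix an auxiliary Riemannian metric on $M$ and form the gradient $\nabla f$. Since $[a,b]$ contains no critical values, $\nabla f$ is nowhere zero on the compact set $f^{-1}[a,b]$, hence on the open set $\{\nabla f\neq 0\}$ containing it; choose a compact neighborhood $K$ of $f^{-1}[a,b]$ with $K\subset\{\nabla f\neq 0\}$, a smooth function $\rho\colon M\to[0,\infty)$ equal to $1/\langle\nabla f,\nabla f\rangle$ on $f^{-1}[a,b]$ and vanishing outside $K$, and set $X=\rho\,\nabla f$. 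Then $X$ is a smooth, compactly supported---hence complete---vector field on $M$ with $df(X)\equiv 1$ on $f^{-1}[a,b]$; let $\phi\colon\mathbb{R}\times M\to M$ denote its flow.

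The core of the argument is that as long as a flow line $s\mapsto\phi_s(p)$ stays inside $f^{-1}[a,b]$ one has $\tfrac{d}{ds}f(\phi_s(p))=df(X)=1$, so $f$ increases at unit rate along it. From this I would deduce that for each $p\in f^{-1}(c)$ the flow line remains in $f^{-1}[a,b]$ for all parameters $s\in[a-c,\,b-c]$, with $f(\phi_s(p))=c+s$ throughout: if $s^\star$ were the first parameter at which the line left $f^{-1}[a,b]$ (flowing forward, say), then $\phi_{s^\star}(p)\in f^{-1}[a,b]$ by closedness and $f(\phi_{s^\star}(p))=c+s^\star$, and since $f$ continues to increase at unit rate in a neighborhood of that point, the line cannot leave $f^{-1}[a,b]$ there unless $c+s^\star=b$, i.e.\ $s^\star=b-c$; the backward direction is symmetric. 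The only genuinely delicate points are the cases where the relevant parameter lands at an endpoint of $[a,b]$, which need a short separate continuity argument.

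Granting this, I would set
\[
    \Psi\colon f^{-1}(c)\times[a,b]\longrightarrow f^{-1}[a,b],\qquad \Psi(p,t)=\phi_{\,t-c}(p),
\]
which is smooth and takes values in $f^{-1}[a,b]$ by the previous step, and satisfies $f(\Psi(p,t))=t$. Its two-sided inverse is $q\mapsto\bigl(\phi_{\,c-f(q)}(q),\,f(q)\bigr)$, well defined and smooth by the same confinement argument applied to flow lines starting on $f^{-1}(c')$ for arbitrary $c'\in[a,b]$; that these maps compose to the identity in both orders is immediate from the group law $\phi_s\circ\phi_t=\phi_{s+t}$. Hence $\Psi$ is the desired diffeomorphism. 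The main obstacle is just the bookkeeping of the second paragraph---confirming that the normalized gradient flow does not escape $f^{-1}[a,b]$ before parameter time $b-a$ elapses---since compactness is exactly what makes $K$ and the cutoff $\rho$ available, and everything else is formal.
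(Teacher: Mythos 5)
Your argument is correct and is exactly the standard normalized-gradient-flow proof that the paper has in mind when it states the lemma and omits the proof as ``standard \dots{} by considering the negative gradient flow'': cutoff of $\nabla f/\|\nabla f\|^2$, completeness of the compactly supported field, unit-speed increase of $f$ along flow lines, and the flow-defined product map $\Psi(p,t)=\phi_{t-c}(p)$ with inverse $q\mapsto(\phi_{c-f(q)}(q),f(q))$. The endpoint cases you flag are indeed handled by the one-line observation that $df(X)=1$ at points of $f^{-1}(a)\cup f^{-1}(b)$ forces the flow line to re-enter $f^{-1}[a,b]$, so there is no gap.
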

%

\begin{proof}[Proof of \autoref{thm:forSpheres}]
    Let $k<l$. By an affine transformation of $\mathbb{R}$, we may assume that $f$ takes values in $[-1,1]$ as well as $S^k=f^{-1}(-1)$ and $S^l=f^{-1}(1)$. Note that if $l=d$, then $M$ will have one path component $S^l$ on which $f$ is identically $1$ and at least one more component $M'$, where $f|_{M'}$ is Morse-Bott with one global minima $S^k$. The global maxima on $M'$ is also a critical submanifold, which is a contradiction, unless $f$ is constant on $M'$. This forces $k=d=l$, which is not possible. Thus, we conclude that $l<d$.\\
    \begin{figure}[!htb]
        \centering
    \def\svgwidth{0.3\columnwidth}
\begingroup%
  \makeatletter%
  \providecommand\color[2][]{%
    \errmessage{(Inkscape) Color is used for the text in Inkscape, but the package 'color.sty' is not loaded}%
    \renewcommand\color[2][]{}%
  }%
  \providecommand\transparent[1]{%
    \errmessage{(Inkscape) Transparency is used (non-zero) for the text in Inkscape, but the package 'transparent.sty' is not loaded}%
    \renewcommand\transparent[1]{}%
  }%
  \providecommand\rotatebox[2]{#2}%
  \newcommand*\fsize{\dimexpr\f@size pt\relax}%
  \newcommand*\lineheight[1]{\fontsize{\fsize}{#1\fsize}\selectfont}%
  \ifx\svgwidth\undefined%
    \setlength{\unitlength}{557.33283576bp}%
    \ifx\svgscale\undefined%
      \relax%
    \else%
      \setlength{\unitlength}{\unitlength * \real{\svgscale}}%
    \fi%
  \else%
    \setlength{\unitlength}{\svgwidth}%
  \fi%
  \global\let\svgwidth\undefined%
  \global\let\svgscale\undefined%
  \makeatother%
  \begin{picture}(1,0.88405193)%
    \lineheight{1}%
    \setlength\tabcolsep{0pt}%
    \put(0,0){\includegraphics[width=\unitlength,page=1]{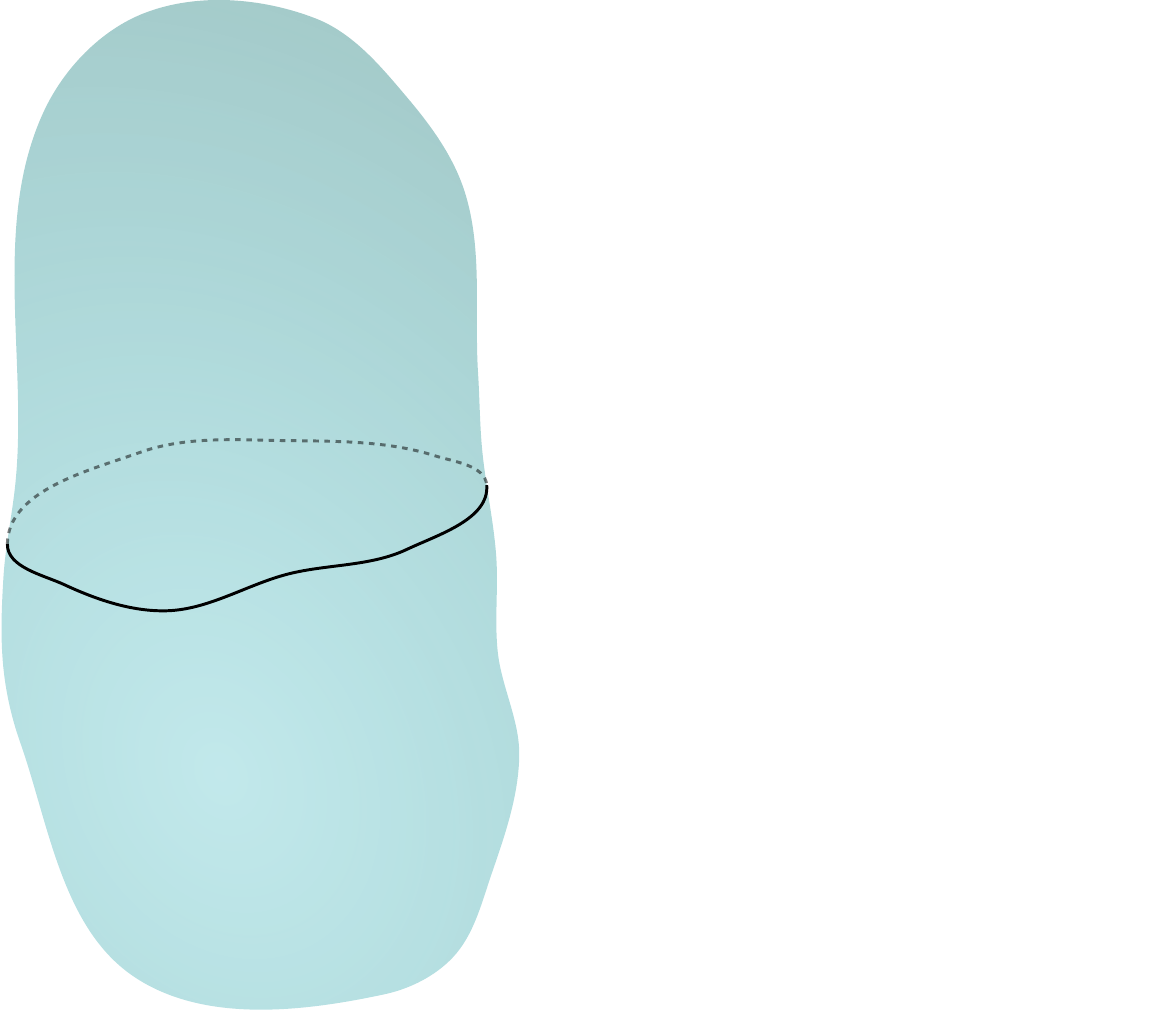}}%
    \put(0.43826095,0.87518496){\color[rgb]{0,0,0}\transparent{0.92549002}\makebox(0,0)[t]{\lineheight{1.25}\smash{\begin{tabular}[t]{c}$S^l=f^{-1}(1)$\end{tabular}}}}%
    \put(0.07368468,0.29005779){\color[rgb]{0,0,0}\transparent{0.92549002}\makebox(0,0)[t]{\lineheight{1.25}\smash{\begin{tabular}[t]{c}$M_0=f^{-1}(0)$\end{tabular}}}}%
    \put(0.51194493,0.01928909){\color[rgb]{0,0,0}\makebox(0,0)[t]{\lineheight{1.25}\smash{\begin{tabular}[t]{c}$S^k = f^{-1}(-1)$\end{tabular}}}}%
    \put(0,0){\includegraphics[width=\unitlength,page=2]{morse-bott-function-on-M.pdf}}%
    \put(0.98763501,0.88481847){\color[rgb]{0,0,0}\makebox(0,0)[t]{\lineheight{0}\smash{\begin{tabular}[t]{c}$\mathbb{R}$\end{tabular}}}}%
    \put(0.73708751,0.41239956){\color[rgb]{0,0,0}\makebox(0,0)[t]{\lineheight{0}\smash{\begin{tabular}[t]{c}$f$\end{tabular}}}}%
    \put(0,0){\includegraphics[width=\unitlength,page=3]{morse-bott-function-on-M.pdf}}%
  \end{picture}%
\endgroup%

        \caption{Morse-Bott function $f$ \label{fig:Morse-Bott-function-on-M}}
    \end{figure}
    \hspace*{0.5cm} Choose a Riemannian metric $g$ on $M$ and consider the normalized gradient flow $X=-(\nabla f)/\|\nabla f\|$ on $f^{-1}[a,b]$, where $-1<a<b<1$. Then, from Morse-Bott theory, we have the following:\\
    \hspace*{0.5cm} (i) \autoref{lem:morseDiffeo} implies that $f^{-1}[a,b]$ is diffeomorphic to $M_0\times [a,b]$, whenever $-1<a<0<b<1$. Moreover, $f^{-1}(c)$ is diffeomorphic to $M_0$ for any $c\in [a,b]$;\\
    \hspace*{0.5cm} (ii) There exists $\epsilon>0$ such that $M^{-1+\epsilon}:=f^{-1}(-\infty,-1+\epsilon]$ is homeomorphic to the unit disk bundle $D(\nu_{S^k})$ of the normal bundle $\nu_{S^k}$ of $S^k$ in $M$;\\
    \hspace*{0.5cm} (iii) There exists $\delta>0$ such that $f^{-1}[1-\delta,\infty)$ is homeomorphic to the unit disk bundle $D(\nu_{S^l})$ of the normal bundle $\nu_{S^l}$ of $S^l$ in $M$.\\
    It follows from (ii) and (iii) that $M_0$ is diffeomorphic to the unit sphere bundle of $\nu_{S^k}$ as well as that of $\nu_{S^l}$.    \\
    Consider the sphere bundles $\mathcal{F}_k$ and $\mathcal{F} _l$ over $S^k$ and $S^l$ respectively, i.e.,  
    \[
        \left.
            \begin{tikzcd}
                S^{d-k-1} \arrow[r,hook] & M_0 \arrow[d]\\
                                                & S^k
            \end{tikzcd}
            \kern 0.2cm 
        \right\} \mathcal{F}_k
        \kern 1cm
        \left.
            \begin{tikzcd}
                S^{d-l-1} \arrow[r,hook] & M_0 \arrow[d]\\
                                               & S^l
            \end{tikzcd}
            \kern 0.2cm 
        \right\} \mathcal{F}_l
    \]  
    We have introduced the key ingredients of the proof. Note that $f^{-1}[0,1]$ and $f^{-1}[-1,0]$ are homotopy equivalent to $S^l$ and $S^k$ respectively. As $l>1$, $f^{-1}[0,1]$ is simply connected and as $k\geq 1$, $f^{-1}[-1,0]$ is path connected. Since $M_0$ is non-empty, the union $f^{-1}[-1,0]\cup f^{-1}[0,1]=M$ is connected. \\ 
    \hspace*{0.5cm}We will prove our result as a combination of the following steps: 
    \begin{enumerate}[(1)]
        \item[] \textbf{Step 1:} $M$ is of dimension $k + l + 1$ (\autoref{M-dim}).
        \item[] \textbf{Step 2:} $M$ is simply connected (\autoref{M-pi1=0}).
        \item[] \textbf{Step 3:} $M$ is a homology sphere (\autoref{M-hom-sph}). 
    \end{enumerate}
    Assuming these steps, by the Hurewicz theorem, we have $\pi _d(M) \cong H_d(M) \cong \mathbb{Z} $. We may invoke the following classical and important results:
    \begin{enumerate}
        \item If $d = 2$, then $M$ is a smooth sphere by classification of oriented closed surfaces and uniqueness of smooth structures.
        \item If $d = 3$, then $M$ is a smooth sphere by Perelman's proof of the Poincar\'{e} conjecture \cite{Perelman} and uniqueness of smooth structures.
        \item If $d = 4$, then by Freedman's result \cite{Fre82}, $M$ is a topological sphere. 
        \item If $d\geq 5$, then by generalized Poincar\'{e} conjecture \cite{Sma61} due to Smale, $M$ is a topological sphere.   
    \end{enumerate} 
    Hence, the theorem is proved. 
\end{proof}
    As indicated in the introduction, \autoref{thm:forSpheres} is sharp in the sense that the homeomorphism cannot be extended to a diffeomorphism. 
    \begin{lemma}\label{M-dim}
        With the conditions as in \autoref{thm:forSpheres}, the dimension of $M$ is $k+l+1$.
    \end{lemma}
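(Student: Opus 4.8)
The plan is to read off the dimension constraint entirely from the regular level set $M_0 = f^{-1}(0)$. As recorded just before the lemma, $M_0$ is a closed smooth $(d-1)$-manifold which is simultaneously the unit sphere bundle of the normal bundle $\nu_{S^k}$ and of $\nu_{S^l}$, so we have two fibrations
\[
S^{d-k-1}\hookrightarrow M_0\to S^k,\qquad S^{d-l-1}\hookrightarrow M_0\to S^l .
\]
Since the critical submanifolds are connected spheres, $1\le k<l$, hence $l\ge 2$; and from $l<d$ (established at the start of the proof of \autoref{thm:forSpheres}) we get $d-k-1=(d-l-1)+(l-k)\ge 1$, so the fibre $S^{d-k-1}$ is connected and therefore $M_0$ is connected. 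Feeding this into the bundle over the simply connected base $S^l$: a $0$-dimensional fibre would make $M_0$ a trivial double cover of $S^l$, hence disconnected; so $d-l-1\ge 1$ as well, and all four spheres appearing above are positive-dimensional.

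Next I would compute $H^\ast(M_0;\mathbb{Z}/2)$ in two ways using the mod-$2$ Gysin sequences of the two sphere bundles (the mod-$2$ Gysin sequence applies to any sphere bundle). Over a base $S^n$ the cohomology is concentrated in degrees $0$ and $n$, so the mod-$2$ Euler class of an $S^m$-bundle over $S^n$ is forced to vanish unless $m=n-1$. Consequently, from the bundle over $S^k$ the graded vector space $H^\ast(M_0;\mathbb{Z}/2)$ is either that of $S^k\times S^{d-k-1}$, with Poincar\'e polynomial $(1+t^k)(1+t^{d-k-1})$ and total dimension $4$, or---only in the exceptional case $d=2k$ with nonvanishing Euler class---that of $S^{d-1}$, with total dimension $2$; and symmetrically for the bundle over $S^l$. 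The total mod-$2$ Betti number of $M_0$ is one fixed integer, so the ``sphere'' alternative for one bundle would force the other into it too, i.e.\ $d=2k$ and $d=2l$ simultaneously, contradicting $k\ne l$. Hence both bundles give product-type cohomology and we obtain the identity
\[
(1+t^k)(1+t^{d-k-1})=(1+t^l)(1+t^{d-l-1})\qquad\text{in }\mathbb{Z}[t].
\]

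Cancelling the common summands $1$ and $t^{d-1}$ reduces this to $t^k+t^{d-k-1}=t^l+t^{d-l-1}$. Since a sum of two powers of $t$ determines its exponents as a multiset, either the left side equals $2t^k$ (forcing $l=k$, impossible) or $\{k,d-k-1\}=\{l,d-l-1\}$; the option $k=l$ being excluded, we are left with $k=d-l-1$, that is $d=k+l+1$. I expect the only real obstacle to be the bookkeeping around the Gysin sequence---specifically, being careful that the mod-$2$ Euler class does not collapse the cohomology of $M_0$ onto that of a sphere---and this is precisely the place where the hypothesis that $S^k$ and $S^l$ have different dimensions is indispensable. (If one prefers, the mod-$2$ Serre spectral sequences of the two fibrations yield the same polynomial identity.)
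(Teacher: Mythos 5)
Your argument is correct, but it takes a genuinely different route from the paper's. The paper works with homotopy groups: from the long exact sequences of $\mathcal{F}_k$ and $\mathcal{F}_l$ it first proves $d-k>k$ and $d-l\le l$, deduces that $M_0$ is $(k-1)$-connected and $(d-l-2)$-connected (hence $d\le k+l+1$), and finally rules out $d<k+l+1$ by noting that $\pi_{d-l-1}(M_0)=0$ would force $\pi_{d-l}(S^l)$ to surject onto $\mathbb{Z}$, which by Serre's theorem can happen only when $d-l=l$, or $d-l=2l-1$ with $l$ even, both incompatible with $d\le k+l+1$ and $k<l$. You instead compute the mod $2$ Poincar\'e polynomial of $M_0$ twice via the Gysin sequences of the two sphere bundles: over a sphere base the mod $2$ Euler class either vanishes, giving total Betti number $4$ and polynomial $(1+t^k)(1+t^{d-k-1})$, respectively $(1+t^l)(1+t^{d-l-1})$, or is nonzero, which for degree reasons requires $d=2k$, respectively $d=2l$, and gives total Betti number $2$; comparing totals and using $k\ne l$ eliminates the exceptional cases, and the resulting polynomial identity forces $\{k,d-k-1\}=\{l,d-l-1\}$, hence $d=k+l+1$ since $k\neq l$. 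Your preliminary reductions (connectedness of $M_0$ from the fibre $S^{d-k-1}$ with $d-k-1\ge 1$, and exclusion of a $0$-dimensional fibre over the simply connected base $S^l$) are also sound, and they are exactly what is needed to stay in the standard Gysin setting. The trade-off: your computation is more elementary and self-contained---no Serre finiteness theorem, no homotopy-group case analysis, and mod $2$ coefficients sidestep all orientability questions---whereas the paper's route produces along the way the inequality $d-k>k$ and connectivity statements about $M_0$ in the same homotopy-theoretic style as the later lemmas; since those facts follow immediately once $d=k+l+1$ and $k<l$ are known, nothing needed downstream is lost by your approach.
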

    \begin{proof} As $k\geq 1$, this forces $l\geq 2$, whence $d \geq l + 1 \geq 3$. We now make the following two claims:
    \begin{enumerate}[(1)]
        \item[] \textbf{Claim 1:}  $d - k > k$; \label{claim-1}
        \item[] \textbf{Claim 2:}  $d - l \leq l$. \label{claim-2}
    \end{enumerate}
    \begin{proof}[Proof of claim 1]
        If the claim is false, then
        \[
            d - l < d-k \leq k < l.
        \] 
        Using the long exact sequence of homotopy groups for the fibration $\mathcal{F} _l$ and $\mathcal{F} _k$  we get 
        \begin{equation}\label{eqn:longExactSequenceFl}
            \mathcal{F}_l: \kern 0.3cm \cdots \to \pi_{d-l}(S^l) \to \pi_{d - l - 1} ( S^{d - l - 1} ) \rightarrow \pi_{d - l - 1} \left( M_0 \right) \to  \pi_{d - l - 1}( S^l) \to  \cdots
        \end{equation}
        \begin{equation}\label{eqn:longExactSequenceFk}
            \mathcal{F}_k: \kern 0.3cm \cdots \to \pi _{d-l}(S^k) \to  \pi _{d-l-1}(S^{d - k - 1}) \to  \pi _{d-l-1}(M_0) \to \pi _{d-l-1}(S^k) \to  \cdots
        \end{equation}
        If $d- l - 1 > 0$, then using \eqref{eqn:longExactSequenceFl} we get 
        \[
            \mathbb{Z} \cong \pi _{d-l-1}(S^{d-l-1}) \cong \pi _{d-l-1}(M_0).
        \]  
        On the other hand, using \eqref{eqn:longExactSequenceFk}, we get
        $\pi _{d-l-1}(M_0) \cong 0$, which is a contradiction, implying $d=l+1$.\\
        \hspace*{0.5cm}        If $d = l + 1$, then the normal bundle $\nu_{S^l}$ is a line bundle over $S^l$. As $S^l$ is simply connected, $\nu_{S^l}$ is trivializable, and $M_0\cong S^l\sqcup S^l$. 
        The sequence \eqref{eqn:longExactSequenceFk} reduces to
        \begin{equation*}
            \cdots \to \pi_1(S^k)\to \pi_{0} ( S^{l-k} ) \rightarrow \pi_{0} \left( M_0 \right) \to  \pi_{0}( S^k) \to 0,
        \end{equation*}
        which is again not possible as $l-k \geq 1$. This forces $d - k > k$.  
    \end{proof}
    As $d-k >k\geq 1$, the fibre of $\mathcal{F} _k$ is always connected and hence $M_0$ is also connected.

    \begin{proof}[Proof of claim 2]
        If the claim is false, then
        \[
             d - k > d - l > l > k \implies d - l \geq k + 2 \text{ and } d-k \geq k +3.
        \]
        Now consider the long exact sequence of homotopy groups for the fibrations $\mathcal{F} _l$ and $\mathcal{F}_k$:
        \begin{equation}\label{eq:longExactSequenceFlClaim2}
            \begin{tikzcd}[column sep=2em]
                \mathcal{F}_l : \kern 0.3cm \cdots \arrow[r] & 
                \cancelto{0}{\pi_k(S^{d-l-1})} \arrow[r] & 
                \pi_k(M_0) \arrow[r] & 
                \cancelto{0}{\pi_k(S^l)} \arrow[r] & 
                \pi_{k-1}(S^{d-l-1}) \arrow[r] & 
                \cdots
            \end{tikzcd}
        \end{equation}
        \begin{equation}\label{eq:longExactSequenceFkClaim2}
            \begin{tikzcd}[column sep=2em]
                \mathcal{F}_k : \kern 0.3cm \cdots \arrow[r] & 
                \cancelto{0}{\pi_k(S^{d-k-1})} \arrow[r] & 
                \pi_k(M_0) \arrow[r] & 
                \pi_k(S^k) \arrow[r] & 
                \pi_{k-1}(S^{d-k-1}) \arrow[r] & 
                \cdots
            \end{tikzcd}
        \end{equation}
        Using \eqref{eq:longExactSequenceFlClaim2}, we see that $\pi _k(M_0) = 0$. On the other hand, if $k > 1$, then \eqref{eq:longExactSequenceFkClaim2} implies $\pi _k (M_0) \cong \pi _k (S^k) \cong \mathbb{Z} $, a contradiction. If $k = 1$, then \eqref{eq:longExactSequenceFkClaim2} becomes 
        \[
            \cdots \rightarrow 0 \rightarrow  \pi _1(M_0) \rightarrow \mathbb{Z} \rightarrow \pi _0(S^{d-k-1}) \rightarrow \pi _0(M_0) \rightarrow \pi _0(S^k) \to 0.
        \]
        Since $S^{d-k-1}, M_0$ and $S^k$ are connected, we get a short exact sequence of pointed sets and thus, $\pi _1(M_0) \cong \mathbb{Z} $, which is a contradiction. This forces $d - l \leq l$. 
    \end{proof}
        The long exact sequence for $\mathcal{F}_k$ along with \hyperref[claim-1]{claim 1} implies that $M_0$ is $(k-1)$-connected and $\pi_k(M_0)\neq 0$. The long exact sequence for $\mathcal{F}_l$ along with \hyperref[claim-2]{claim 2} implies that  $M_0$ is $(d- l - 2)$-connected. Therefore, $d - l - 2 \leq k - 1$ which implies $d \leq k + l + 1$. Note that $\pi_{d-l-1}(M_0)\neq 0$ if and only if $d-l-1=k$. Look at the long exact sequence for $\mathcal{F}_l$
        \begin{align*}
            \mathcal{F} _l: & \kern 0.3cm \cdots \to \pi _{d-l}(S^l) \to  \pi _{d-l-1}(S^{d-l-1}) \to  \pi_{d-l-1}(M_0) \to \pi_{d-l-1}(S^l)\to \cdots
        \end{align*}  
        If $\pi_{d-l-1}(M_0)= 0$, then $\pi_{d-l} (S^l)$ surjects onto $\mathbb{Z} $. This implies, by Serre's result on homotopy groups of spheres, that one of the following mutually exclusive possibilities must hold:\\
        \hspace*{0.5cm} (i) $d - l = l$, or\\
        \hspace*{0.5cm} (ii) $d - l = 2l - 1$ and $l$ is even.\\
        Since, $d \leq k + l + 1 < l + l + 1 = 2l + 1$, so $d \leq 2l$. Thus, $d = 3l - 1$ is not possible. If $d = 2l$, then 
        \[
            d < k + l + 1 \implies l < k + 1,
        \]         
        which is impossible as $k < l$. This completes the proof of the lemma.
    \end{proof}

    \begin{lemma}\label{M-pi1=0}
        With the conditions as in \autoref{thm:forSpheres}, $M$ is simply connected.
    \end{lemma}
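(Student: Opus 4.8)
The plan is to compute $\pi_1(M)$ with the Seifert--van Kampen theorem applied to the decomposition $M=f^{-1}[-1,0]\cup f^{-1}[0,1]$. Fix $\epsilon>0$ small enough that $(-\epsilon,0)\cup(0,\epsilon)$ contains no critical value of $f$, and set $U=f^{-1}[-1,\epsilon)$ and $V=f^{-1}(-\epsilon,1]$. These are open and cover $M$, and by \autoref{lem:morseDiffeo} the normalized gradient flow identifies $f^{-1}[0,\epsilon]$ with $M_0\times[0,\epsilon]$; hence $U$ deformation retracts onto $f^{-1}[-1,0]\simeq S^k$, $V$ onto $f^{-1}[0,1]\simeq S^l$, and $U\cap V=f^{-1}(-\epsilon,\epsilon)$ onto $M_0$. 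By \autoref{M-dim} we have $d-k-1=l\ge 2$, so the fibre $S^{d-k-1}$ of $\mathcal{F}_k$ is connected; in particular $M_0$ is path-connected, which is the connectivity hypothesis van Kampen requires.

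Since $k<l$ and $k\ge 1$ force $l\ge 2$, we have $\pi_1(V)\cong\pi_1(S^l)=0$, so van Kampen yields $\pi_1(M)\cong\pi_1(S^k)/N$, where $N$ is the smallest normal subgroup of $\pi_1(S^k)$ containing the image of $i_\ast\colon\pi_1(M_0)\to\pi_1(U)\cong\pi_1(S^k)$. It therefore suffices to prove that $i_\ast$ is surjective. Under the identification $f^{-1}[-1,0]\simeq M^{-1+\epsilon}\cong D(\nu_{S^k})$ coming from items (ii)--(iii) in the proof of \autoref{thm:forSpheres}, the deformation retraction of the disk bundle onto its zero section restricts on the boundary $M_0\cong S(\nu_{S^k})$ to the bundle projection of $\mathcal{F}_k$; thus, up to this identification, $i_\ast$ is the homomorphism $\pi_1(M_0)\to\pi_1(S^k)$ induced by $\mathcal{F}_k$. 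In the long exact homotopy sequence of $\mathcal{F}_k$, the segment $\pi_1(M_0)\xrightarrow{\,i_\ast\,}\pi_1(S^k)\to\pi_0(S^{d-k-1})$ together with $\pi_0(S^{d-k-1})=\ast$ (the fibre being connected) forces $i_\ast$ to be onto. Hence $N=\pi_1(S^k)$ and $\pi_1(M)=0$.

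I do not expect a genuine obstacle here. The only point needing attention is the case $k=1$: there $\pi_1(S^k)\cong\mathbb{Z}$ is nontrivial, so the conclusion really hinges on the surjectivity of $i_\ast$ (equivalently, on the connectedness of the fibre $S^{d-k-1}=S^l$ of $\mathcal{F}_k$, which holds since $l\ge 2$), rather than on vanishing of $\pi_1(U)$; indeed in that case $\pi_1(M_0)\cong\mathbb{Z}$ maps isomorphically onto $\pi_1(S^1)$. For $k\ge 2$ the lemma is immediate, because $U$ and $V$ are then both simply connected and $U\cap V$ is connected.
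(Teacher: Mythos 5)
Your proof is correct, and it follows the same overall strategy as the paper: a Seifert--van Kampen argument for the cover $U\simeq S^k$, $V\simeq S^l$ with $U\cap V\simeq M_0$, reducing everything to the image of $\pi_1(M_0)\to\pi_1(U)\cong\pi_1(S^k)$, which matters only when $k=1$. The one place you diverge is in how that image is controlled: the paper splits the normal bundle $\nu_{S^k}\cong\epsilon^{l+1-k}\oplus\xi^k$ (possible since its rank $l+1$ exceeds $k$) to produce a section $s$ of $\mathcal{F}_k$, concluding that $\pi_\ast\circ\iota_\ast:\pi_1(M_0)\to\pi_1(S^k)$ is an isomorphism; you instead read off surjectivity of $\pi_1(M_0)\to\pi_1(S^k)$ from the tail of the long exact sequence of $\mathcal{F}_k$, using only that the fibre $S^{d-k-1}=S^l$ is connected (valid here since \autoref{M-dim} gives $d-k-1=l\geq 2$, and surjectivity is all van Kampen needs). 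Your route is marginally more economical for this lemma, since it avoids the section; the paper's section argument proves more than is needed here but is reused later (e.g.\ in \autoref{M-hom-sph} and the splitting \eqref{split}), so the two approaches are essentially equivalent in the context of the whole proof. Your identification of $i_\ast$ with the map induced by the sphere-bundle projection, via the retraction of $D(\nu_{S^k})$ onto its zero section, is also exactly how the paper treats it.
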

    \begin{proof}
    Consider the normal bundle $\nu_{S^k}\to S^k$ of rank $d-k=l+1$. As the rank is greater than the dimension of the base, we choose a splitting 
    \begin{equation}
        \nu_{S^k}\cong \epsilon^{l+1-k}\oplus \xi^k,\label{split}
    \end{equation}
    where $l+1-k\geq 2$. In particular, the fibre bundle $\mathcal{F}_k$ has a section $s$. Thus, there is an isomorphism
    \begin{displaymath}
    \varphi: \pi_j(S^k)\oplus \pi_j(S^l)\xrightarrow{s_\ast + i_\ast} \pi_j(M_0).    
    \end{displaymath}
    In particular, when $j=1$, the map $s_\ast:\pi_1(S^k)\to \pi_1(M_0)$ is an isomorphism. Let $\iota:M_0\hookrightarrow \nu_{S^k}$ be the inclusion and let $\pi:\nu_{S^k}\to S^k$ be the bundle map. Then the map $\pi\,{\scriptstyle\circ}\,\iota:M_0\to S^k$ satisfies 
    \begin{displaymath}
        \pi\,{\scriptstyle\circ}\,\iota\,{\scriptstyle\circ}\,s = \textup{id}_{S^k}.
    \end{displaymath}
    As $s_\ast$ is an isomorphism, it follows that $\pi_\ast\,{\scriptstyle\circ} \,\iota_\ast$ is also an isomorphism.\\
    \hspace*{0.5cm} Let us consider the open cover $U=f^{-1}[-1,1/2)$ and $V=f^{-1}(-1/2,1]$ of $M$. Note that $U$ deforms to $D(\nu_{S^k})\simeq S^k$ and $V$ deforms to $D(\nu_{S^l})\simeq S^l$ respectively, with $U\cap V$ having $M_0$ as a deformation retract. If $k=1$, then $U, V$ and $U\cap V$ are path-connected; Seifert-van Kampen Theorem applied to the cover $\{U,V\}$ imply that 
    \begin{displaymath}
        \pi_1(M)\cong\pi_1(U)\ast_{\pi_1(M_0)}\pi_1(V)\cong \pi_1(\nu_{S^k})/\iota_\ast \pi_1(M_0)\cong \pi_1(S^k)/\pi_\ast\iota_\ast \pi_1(M_0)=0.
    \end{displaymath}
    If $k>1$, then $\pi_1(U)=\pi_1(V)=0$ implies that $\pi_1(M)=0$.
    \end{proof}

    \begin{lemma}\label{M-hom-sph}
        With the conditions as in \autoref{thm:forSpheres}, $M$ is a homology sphere.
    \end{lemma}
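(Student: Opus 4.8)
The plan is to compute $H_*(M_0)$ together with the behaviour on homology of the two bundle projections out of $M_0$, and then to feed everything into a Mayer--Vietoris computation for the cover $\{U,V\}$ already used in the proof of \autoref{M-pi1=0}. Throughout I will use that $\dim M=k+l+1$ (\autoref{M-dim}), so $\mathcal F_k$ has fibre $S^l$ and $\mathcal F_l$ has fibre $S^k$, and that $M$ is orientable, being simply connected (\autoref{M-pi1=0}). From $w_1(\nu_{S^k})=w_1(TM)|_{S^k}+w_1(TS^k)=0$, and likewise $w_1(\nu_{S^l})=0$, both sphere bundles are orientable, so their Gysin sequences are available with integer coefficients; moreover $M_0$ is a closed orientable $(k+l)$-manifold.

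\emph{Step 1: the cohomology of $M_0$.} The Euler class of $\nu_{S^k}$ lies in $H^{l+1}(S^k)=0$ (since $l+1>k\geq 1$), so the Gysin sequence of $\mathcal F_k$ breaks into short exact sequences $0\to H^n(S^k)\xrightarrow{\pi^*}H^n(M_0)\to H^{n-l}(S^k)\to 0$ for every $n$. These split (the quotient term is free), giving $H^n(M_0)\cong H^n(S^k)\oplus H^{n-l}(S^k)$, which equals $\Z$ for $n\in\{0,k,l,k+l\}$ (four distinct integers, as $0<k<l<k+l$) and $0$ otherwise; in particular the quotient term $H^{k-l}(S^k)$ vanishes, so $\pi^*\colon H^k(S^k)\to H^k(M_0)$ is an isomorphism. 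By universal coefficients the analogous statements hold in homology, and $\pi_*\colon H_k(M_0)\to H_k(S^k)$ is an isomorphism.

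\emph{Step 2: the projection $\rho$.} The Euler class of $\nu_{S^l}$ lies in $H^{k+1}(S^l)$, which is $0$ unless $l=k+1$. If $l>k+1$, the Gysin sequence of $\mathcal F_l$ splits and at degree $l$ reads $0\to H^l(S^l)\xrightarrow{\rho^*}H^l(M_0)\to H^{l-k}(S^l)$, with $H^{l-k}(S^l)=0$ because $0<l-k<l$; hence $\rho^*$ is an isomorphism in degree $l$. If $l=k+1$, the Gysin sequence near degree $l$ is $H^0(S^l)\xrightarrow{\cup e}H^l(S^l)\xrightarrow{\rho^*}H^l(M_0)\to H^1(S^l)=0$, so $\rho^*\colon H^l(S^l)\to H^l(M_0)$ is onto; as both groups are $\Z$ by Step 1, it is again an isomorphism (and, incidentally, $e(\nu_{S^l})=0$). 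Either way $\rho_*\colon H_l(M_0)\to H_l(S^l)$ is an isomorphism. The only delicate point in the whole argument is precisely this $l=k+1$ case: a priori a nonzero Euler class $e(\nu_{S^l})\in H^l(S^l)\cong\Z$ could keep $\rho^*$ from being injective, and it is ruled out only because Step 1 has already pinned down $H^l(M_0)\cong\Z$ from the $\mathcal F_k$ side.

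\emph{Step 3: Mayer--Vietoris.} Apply it to $M=U\cup V$ with $U=f^{-1}[-1,1/2)\simeq S^k$, $V=f^{-1}(-1/2,1]\simeq S^l$, $U\cap V\simeq M_0$, exactly as in \autoref{M-pi1=0}; under these equivalences the two inclusions of $M_0$ become the bundle projections $\pi$ and $\rho$. The only nonzero groups involved are $H_*(M_0)=\Z$ in degrees $0,k,l,k+l$, and $H_*(S^k)\oplus H_*(S^l)$, which is $\Z^2$ in degree $0$, $\Z$ in degrees $k$ and $l$ (recall $k\neq l$ and $k,l\geq 1$), and $0$ elsewhere. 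The map $H_n(M_0)\to H_n(S^k)\oplus H_n(S^l)$ is $1\mapsto(1,-1)$ for $n=0$, the isomorphism $\pi_*$ onto $H_k(S^k)$ for $n=k$ (Step 1), the isomorphism $\rho_*$ (up to sign) onto $H_l(S^l)$ for $n=l$ (Step 2), and zero for $n=k+l$. So its cokernel is $\Z$ in degree $0$ and trivial otherwise, and its kernel is $\Z$ in degree $k+l$ and trivial otherwise. Reading the long exact sequence gives $H_0(M)=\Z$, $H_d(M)=\Z$ (from the kernel in degree $k+l=d-1$), and $H_n(M)=0$ for $0<n<d$; that is, $M$ is a homology sphere.
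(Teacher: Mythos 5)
Your argument is correct, and it follows the same overall skeleton as the paper (compute $H_\bullet(M_0)\cong H_\bullet(S^k\times S^l)$, then run Mayer--Vietoris on the cover $\{U,V\}$), but the way you control the two crucial maps is genuinely different. The paper obtains $H_\bullet(M_0)$ from the Serre spectral sequence using the section of $\mathcal{F}_k$ coming from the splitting $\nu_{S^k}\cong\epsilon^{l+1-k}\oplus\xi^k$, kills $H_k(M)$ by observing that the section is homotopic to the zero section (so $H_k(M_0)\to H_k(U)$ is an isomorphism), and then disposes of $H_l(M)$ by arguing it is torsion and invoking Poincar\'e/linking duality, $\tau H_l(M)\cong\tau H_{d-l-1}(M)=\tau H_k(M)=0$. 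You instead use the Gysin sequences of both $\mathcal{F}_k$ and $\mathcal{F}_l$ (after checking $w_1(\nu_{S^k})=w_1(\nu_{S^l})=0$, which parallels the paper's orientation remark) to show directly that $\pi_\ast$ and $\rho_\ast$ are isomorphisms in degrees $k$ and $l$ respectively, so the Mayer--Vietoris map $H_n(M_0)\to H_n(U)\oplus H_n(V)$ is an isomorphism in those degrees and no duality argument is needed. This buys two things: you need neither the section nor Poincar\'e duality, and you explicitly address the one genuinely delicate point when $l=k+1$, where a nonzero Euler class $e(\nu_{S^l})\in H^{k+1}(S^l)\cong\mathbb{Z}$ could make $\rho_\ast$ fail to be onto $H_l(S^l)$ --- you rule this out by playing the two Gysin sequences against each other ($H^l(M_0)\cong\mathbb{Z}$ from the $\mathcal{F}_k$ side forces $e(\nu_{S^l})=0$). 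This is precisely the degree where the paper's phrase ``$H_l(M)$ can only have torsion'' is tersest, so your route makes that step fully explicit while the paper's route resolves it instead through the linking-duality argument.
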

    \begin{proof}
    Recall the splitting \eqref{split}
    \begin{displaymath}
    \nu_{S^k}\cong \epsilon^{l+1-k}\oplus \xi^k.
    \end{displaymath}
    As $S^k$ and $M$ are oriented, the normal bundle $\nu_{S^k}$ and the sphere bundle $M_0$ are oriented. Thus, the action of $\pi_1(S^k)$ on the homology of the fibre sphere is trivial. As this fibre bundle has a section, it follows from the Serre spectral sequence that 
    \begin{equation}
         H_\bullet(M_0) \cong H_\bullet ( S^k ) \otimes H_\bullet ( S^l) .\label{M_0}
    \end{equation}
    Let $U$ and $V$ be as in the proof of \autoref{M-pi1=0}. Using the Mayer-Vietoris sequence, for any $i<k$, we have 
    \[
        \begin{tikzcd}[column sep = 2em]
            \cdots \arrow[r] & \cancelto{0}{H_i(M_0)} \arrow[r] & \cancelto{0}{H_i(U) \oplus H_i(V)} \arrow[r] & H_i(M) \arrow[r] & \cancelto{0}{H_{i-1}(M_0)} \arrow[r] & \cdots
        \end{tikzcd}
    \]
    Thus $H_i(M) \cong 0$ for $i < k$. Also, if $i = d-1$, then we have
    \[
        \begin{tikzcd}[column sep = 2em]
            \cdots \arrow[r] & 0 \arrow[r] & H_d(M) \arrow[r] & H_{d-1}(M_0) \arrow[r] & 0 \arrow[r] &  \cdots,
        \end{tikzcd}
    \]
    which implies $H_d(M) \cong \mathbb{Z} $. 
     
    \hspace{0.3cm} Look at the $k^{\text{th} }$ homology of $M$ in the Mayer-Vietoris sequence:
    \[
        \begin{tikzcd}[column sep = 2em]
            \cdots \arrow[r] & H_k(M_0) \arrow[r] & H_k(U) \oplus \cancelto{0}{H_k(V)} \arrow[r] & H_k(M) \arrow[r] & \cancelto{0}{H_{k-1}(M_0)} \arrow[r] & \cdots 
        \end{tikzcd}
    \] 
    Since $d-k > k$, the fibration $\mathcal{F} _k$ has a section $s$. Note that $s(S^k) \hookrightarrow M_0$ generates $H_k(M_0)$. Also, $s$ is homotopic to the zero section of the bundle. Thus, the two copies of $S^k$ inside $U$ (one $S^k$ is the zero section which generates $H_k(U)$ and other is $s(S^k)$) are homotopic. This implies that $H_k(M_0)\to H_k(U)$ is an isomorphism, whence $H_k(M) \cong 0$. Also, $H_i(M) \cong 0$ for $k \leq i \leq l-1$. Now look at the sequence for $l = k+1$.
    \[
        \cdots \longrightarrow H_l(M_0) \longrightarrow H_l(U) \oplus H_l(V) \longrightarrow H_l(M) \longrightarrow H_k(M_0) \longrightarrow \cdots, 
    \] 
    which gives 
    \[
        \cdots \longrightarrow \mathbb{Z} \longrightarrow \mathbb{Z} \longrightarrow   H_l(M) \longrightarrow   \mathbb{Z} \longrightarrow \cdots. 
    \]
    Thus, $H_l(M)$ can only have torsion. If $\tau H_i (M)$ denotes the torsion part of $H_i(M)$, then by Poincar\'{e} (or linking) duality, we have 
    \[
        \tau H_l(M) \cong \tau H_{d-l-1}(M) = \tau H_k(M) \cong 0.
    \]
    This establishes that $H_l(M) \cong 0$. It is clear that $H_i(M) \cong 0$ for $l\leq i \leq d-1$. Thus, $M$ is a homology sphere of dimension $d$. 
    \end{proof}
\begin{remark}
    It is natural to ask, based on \eqref{M_0}, whether $M_0$ is homeomorphic (or even homotopic) to $S^k \times S^l$. It follows from a result \cite[Theorem 6.10]{BaBhSa24} of Basu-Bhowmick-Samanta that $M_0$ is rationally homotopy equivalent to $S^k \times S^l$. Unless further hypothesis is imposed, we do not expect $M_0$ to be homotopy equivalent to $S^k\times S^l$.
\end{remark}

\begin{eg}\label{eg:ht-square-v2}
    In the hypothesis of \autoref{thm:forSpheres}, we may drop the connectivity assumption on $S^k$, i.e., assume that $k=0<l$ with $S^0=\{p,q\}$. There are (up to natural symmetry) four cases and the first three cases cannot occur:\\
    \hspace*{0.5cm}(i) $f(p)=f(S^l)=f(q)$: As $f$ is constant on the critical set, this forces the maxima and minima of $f$ to be the same, whence it is constant.\\
    \hspace*{0.5cm}(ii) $f(p)>f(S^l)=f(q)$: Suppose $1=f(p)$ and $-1=f(S^l)=f(q)$. Then $f^{-1}(0)$ is diffeomorphic to the the boundary of the normal bundle of $\{p\}$ as well as that of $S^l\sqcup \{q\}$. The former is $S^{d-1}$ while the latter, $S(\nu_{S^l})\sqcup S^{d-1}$, is disconnected. \\
    \hspace*{0.5cm}(iii) $f(p)>f(S^l)>f(q)$: Suppose $1=f(p), 0=f(S^l), -1=f(q)$. Then the boundary of an appropriate tubular neighbourhood of $S^l$ is identified with $f^{-1}\{\tfrac{1}{2},-\tfrac{1}{2}\}$, which is homeomorphic to $S^{d-1}\sqcup S^{d-1}$. This forces $l=d-1$ and the tubular neighbourhood to be trivial. As $S^l$ is a critical submanifold of codimension $1$ of a Morse-Bott function $f$, this function looks like $\pm x^2$ in the tubular neighbourhood of $S^l$, where $x$ is the normal co-ordinate. This forces $f$ to be decreasing or increasing in the normal direction of $S^l$. As the global maxima and minima are at $p$ and $q$, this forces the existence of further critical points other than $\{p,q\}\sqcup S^l$, a contradiction.   \\
    \hspace*{0.5cm}(iv) $f(p)\geq f(q)>f(S^l)$: Suppose $1=f(p)\geq f(q)=c>f(S^l)=0$.Then $M_{c/2} \simeq S^{d-1}\sqcup S^{d-1}$ is disconnected. Using the fibration $\mathcal{F} _l$, we have
    \[
        \pi _1(S^l) \rightarrow \pi _0(S^{d-l-1}) \rightarrow \pi _0(M_0) \rightarrow \pi _0 (S^l) \rightarrow 0.
    \]  
    Since $\pi _0(M_0)$ contains two points and $\pi _0(S^l)$ is singleton, the set $\pi _0(S^{d-l-1})$ must contain at least two points. Thus, $S^{d-l-1}$ must be disconnected and hence, $d = l+1$. In fact, any line bundle over $S^l$ is trivializable as $S^l$ is simply connected. Thus, $M$ is obtained by gluing $\mathbb{D}^d\sqcup \mathbb{D}^d$ along their boundary to $S^l\times [-1,1]$. This provides a homeomorphism between $M$ and $S^d$. \\
    \hspace*{0.5cm}If we consider the square of the height function $h^2:S^{d+1}\to\mathbb{R}$, then this is Morse-Bott with critical set consisting of the two poles (together considered as the $0$-sphere $S^0$) and the equator 
    $$S^d\cong \{(x_1,\ldots,x_{d+2})\in S^{d+1}\,|\,x_{d+2}=0\}.$$
    More generally, if $c\in (0,1)$, then choose an increasing smooth function $\rho_c:\mathbb{R}\to \mathbb{R}$ satisfying\\
    \hspace*{0.3cm} (a) $\rho_c(x)=x$ for $x\leq \tfrac{c}{2}$;\\
    \hspace*{0.3cm} (b) $\rho_c(1)=\sqrt{c}$;\\
    We may consider the Morse-Bott function $(\rho_c \,{\scriptstyle\circ}\,h)^2:S^{d+1}\to\mathbb{R}$. This is the typical function satisfying case (iv).    
    \begin{figure}[ht]
    	\centering
    	\def\svgwidth{0.4\columnwidth}
    	\graphicspath{{figures/}}
\begingroup%
  \makeatletter%
  \providecommand\color[2][]{%
    \errmessage{(Inkscape) Color is used for the text in Inkscape, but the package 'color.sty' is not loaded}%
    \renewcommand\color[2][]{}%
  }%
  \providecommand\transparent[1]{%
    \errmessage{(Inkscape) Transparency is used (non-zero) for the text in Inkscape, but the package 'transparent.sty' is not loaded}%
    \renewcommand\transparent[1]{}%
  }%
  \providecommand\rotatebox[2]{#2}%
  \newcommand*\fsize{\dimexpr\f@size pt\relax}%
  \newcommand*\lineheight[1]{\fontsize{\fsize}{#1\fsize}\selectfont}%
  \ifx\svgwidth\undefined%
    \setlength{\unitlength}{492.27283868bp}%
    \ifx\svgscale\undefined%
      \relax%
    \else%
      \setlength{\unitlength}{\unitlength * \real{\svgscale}}%
    \fi%
  \else%
    \setlength{\unitlength}{\svgwidth}%
  \fi%
  \global\let\svgwidth\undefined%
  \global\let\svgscale\undefined%
  \makeatother%
  \begin{picture}(1,0.59213607)%
    \lineheight{1}%
    \setlength\tabcolsep{0pt}%
    \put(0,0){\includegraphics[width=\unitlength,page=1]{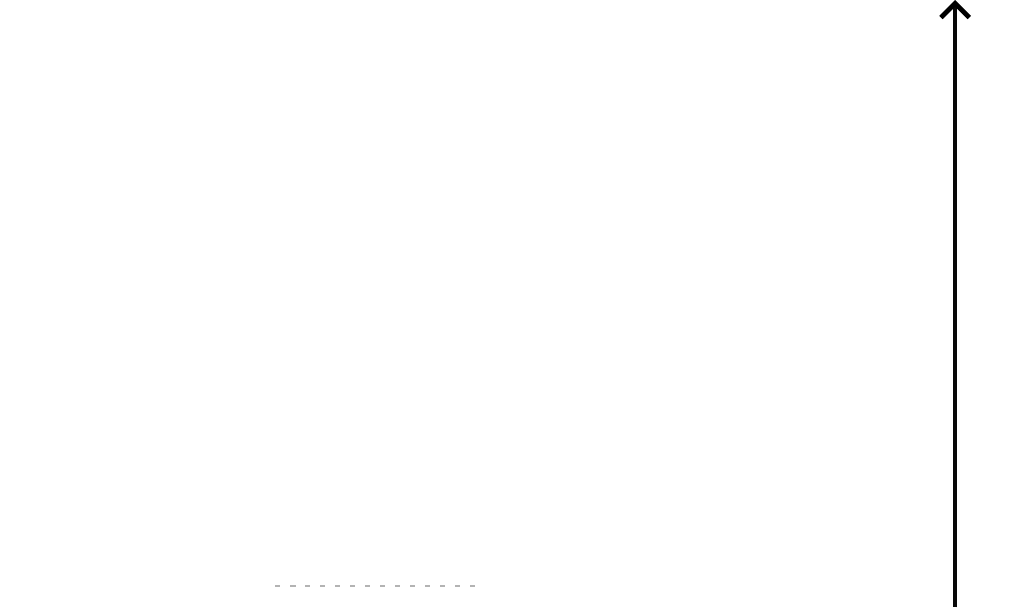}}%
    \put(0.9675113,0.57495271){\color[rgb]{0,0,0}\makebox(0,0)[t]{\lineheight{0}\smash{\begin{tabular}[t]{c}$\mathbb{R}$\end{tabular}}}}%
    \put(0,0){\includegraphics[width=\unitlength,page=2]{height-squared-function.pdf}}%
    \put(0.79216826,0.33365159){\color[rgb]{0,0,0}\makebox(0,0)[t]{\lineheight{0}\smash{\begin{tabular}[t]{c}$h^2$\end{tabular}}}}%
    \put(0,0){\includegraphics[width=\unitlength,page=3]{height-squared-function.pdf}}%
    \put(0.89607938,0.54338261){\color[rgb]{0,0,0}\makebox(0,0)[t]{\lineheight{0}\smash{\begin{tabular}[t]{c}$1$\end{tabular}}}}%
    \put(0.89912652,0.05127754){\color[rgb]{0,0,0}\makebox(0,0)[t]{\lineheight{0}\smash{\begin{tabular}[t]{c}$0$\end{tabular}}}}%
    \put(0,0){\includegraphics[width=\unitlength,page=4]{height-squared-function.pdf}}%
  \end{picture}%
\endgroup%

    	\caption{height-squared function} 
    	\label{fig:height-squared-function}
    \end{figure}
\end{eg}
    \begin{remark}\label{LMZ}
    With the hypothesis as in \autoref{thm:forSpheres}, if we further assume that the normal bundles $\nu_{S^k}$ and $\nu_{S^l}$ are trivializable, then $M_0$ is diffeomorphic to $S^k\times S^l$. The manifold $M$ is obtained by gluing $S^k\times \mathbb{D}^{l+1}$ with $\mathbb{D}^{k+1}\times S^l$ along $M_0$. 
    \end{remark}

    \begin{eg}\label{eg:sigma7-as-join}
    In \cite[Theorem 1.1 and \S 2]{DuPu2008}, Dur\'{a}n and P\"{u}ttmann has shown that the generating exotic sphere $\Sigma^7$ (also realized as a biquotient of the Lie group $Sp(2)$ by Gromoll-Meyer \cite{GrM74}) is the geodesic join of a geodesic loop $S^1$ and a minimal subsphere $\Sigma^5$, with the distance between $S^1$ and $\Sigma^5$ being $\pi/2$. This, in turn, allows us to define a Morse-Bott function on $\Sigma^7$ with $S^1\sqcup \Sigma^5$ as the critical set. This illustrates that the conclusion of $M$ being homeomorphic to $S^d$ in \autoref{thm:forSpheres} cannot be upgraded to being diffeomorphic.
    \end{eg}

    \hspace*{0.5cm}We may assume that $f:M\to \R$ is Morse-Bott with $S^k=f^{-1}(-1)$ and $S^l=f^{-1}(1)$ are critical submanifolds of $f$ but impose no conditions on $k$ and $l$. As $k\neq l$ is covered in Theorem \autoref{thm:forSpheres}, this leaves the case $k=l$. Without a reasonable assumption like $d=k+l+1=2k+1$, even this case will not work generically. Consider any sphere bundle $\pi:E\to S^m$ with fibre $S^k$. If $h:S^m\to\R$ denotes the height function, then $h\,{\scriptstyle\circ}\,\pi$ is Morse-Bott with critical set $S^k\sqcup S^k$. We now explore the scenario when $k=l$ and $d=2k+1$. 
    \begin{eg}\label{eg:k=1}
        When $k=1$, we are studying Morse-Bott functions $f:M^3\to \R$ with critical set $S^1\sqcup S^1$. Note that if $\gamma\to S^2$ is the tautological (complex) line bundle over $S^2=\mathbb{CP}^1$, then the sphere bundles $P_n:=S(\gamma^{\otimes n})$ represent all the distinct $S^1$-bundles over $S^2$, as $n$ varies over the integers. Composing the bundle map with the height function on $S^2$ will give Morse-Bott functions with the required property. However, $P_n$'s (for $n>0$) are not homotopy equivalent. This follows, for instance, by the fact that $\pi_1(P_n)\cong\Z/n\Z$. Note that $(P_n)_0$ is diffeomorphic to the torus.
    \end{eg}
    \begin{eg}\label{eg:k=2}
        When $k=2$, we are dealing with Morse-Bott functions $f:M^5\to \R$. Note that $M_0$, being a $S^2$-bundle over $S^2$, is simply connected. It follows that $M$ is simply connected. There are two possibilities for $M_0$: $S^2\times S^2$ and $\mathbb{CP}^2\#\overline{\mathbb{CP}}^2$. If the critical set of $f$ is $S^2_1\sqcup S^2_2$, then we apply Mayer-Vietoris sequence (in homology) to the cover $\{M\setminus S^2_1, M\setminus S^2_2\}$ to obtain
        $$0\longrightarrow H_3(M)\longrightarrow H_2(M_0)\stackrel{i}{\longrightarrow} H_2(M\setminus S^2_1)\oplus H_2(M\setminus S^2_2)\longrightarrow H_2(M)\longrightarrow 0$$
        Note that $M\setminus S^2_i$ deforms to $S^2_j$, $i\neq j$. Thus, the two middle groups are both $\mathbb{Z}^2$. This forces $H_3(M)$ to be free of rank $0,1$ or $2$. As the normal bundle $\nu_l$ of $S^2_l$ ($l=1,2$) is of rank $3$, both the sphere bundles $M_0=S(\nu_1)\cong S(\nu_2)$ admit sections. Let $s_l:S^2_l\to M_0$ denote these sections. Then $(s_l)_\ast(S^2_l)\in H_2(M_0)$ is non-zero and maps to $(1,b)$ and $(a,1)$, under $i$, for $l=1$ and $l=2$ respectively. Thus, the middle map $i$ is non-zero. This forces $H_3(M)$ to be free of rank $0$ or $1$. By Poincar\'{e} duality, the free part of $H_2(M)$ is of the same rank as $H_3(M)$. With the assumption that $H_2(M)$ has no torsion, the classification of simply-connected $5$-manifolds \cite{Sma61} imply that $S^5$ is the only such $5$-manifold with both $H_2(M)$ and $H_3(M)$ being zero. When both $H_2(M)$ and $H_3(M)$ are of rank $1$, then there are two manifolds - $S^3\times S^2$ and $X_\infty$ \cite{Bar65}. We cover all the three examples.\\
        \hspace*{0.5cm}(a) $f:S^5\to \R$, $f(x_0,\ldots,x_5):=x_0^2+x_1^2+x_2^2$ is Morse-Bott (cf. \autoref{eg:morseBottFunctionOnSphere}). In this case, $M_0=S^2\times S^2$.\\
        \hspace*{0.5cm}(b) $f:S^3\times S^2\to \R$, $f=h\,{\scriptstyle\circ}\, \textup{proj}_{S^3}$ is Morse-Bott, where $h:S^3\to\R$ is the height function.\\
        Even in this case, $M_0=S^2\times S^2$.  \\
        \hspace*{0.5cm}(c) We may consider the sphere bundle $\pi:S(\gamma\oplus\epsilon^1_\mathbb{C})\to S^2$. As it is classified by the generator of $\pi_1(SO(4))$, this sphere bundle is the only non-trivial (oriented) $S^3$-bundle over $S^2$. As this bundle has a section, both $M:=S(\gamma\oplus\epsilon^1_\mathbb{C})$ and $S^2\times S^3$ have the same cohomology ring. According to Barden's classification \cite[Lemma 1.1(v)]{Bar65}, where he denotes $M$ by $X_\infty$, $w_2(X_\infty)\neq 0$ while $w_2(S^2\times S^3)=0$. Thus, $M$ being non-spin and $S^2\times S^3$ being spin cannot be homotopy equivalent. The relevant Morse-Bott function on $M$ is defined as the fibrewise height function. Note that $M$ may be identified as the double fibrewise suspension of the Hopf fibration $S^3\to S^2$. It can be verified that $M_0$ in this case is $\mathbb{CP}^2\#\overline{\mathbb{CP}}^2$. 
    \end{eg}
    \begin{eg}\label{exotic}
    In the famous paper \cite{Mil56}, Milnor had constructed exotic $7$-spheres $M^7$ that arise as smooth (oriented) $S^3$-bundles over $S^4$, i.e., $M^7$ is homeomorphic to $S^7$ but not diffeomorphic to it. We may consider the Morse-Bott function $h\,{\scriptstyle\circ}\, \pi:M\to \R$, where $h:S^4\to\R$ is the height function. Note that $M_0$, being an $S^3$-bundle over $S^3$, is classified by $\pi_2(SO(4))=0$. This implies that $M_0$ is $S^3\times S^3$. 
    \end{eg}
    

\section{The case of projective spaces}

\begin{theorem}\label{thm:forComplexProjective}
    Let $M$ be a closed, smooth manifold of dimension $d$. Let $f$ be a Morse-Bott function on $M$ with only critical submanifolds $\mathbb{CP} ^k$ and $\mathbb{CP} ^l$ with $k<l$. Then $d = 2k + 2l + 2$ and $M$ is homotopic to $\mathbb{CP} ^{k+l+1}$.
\end{theorem}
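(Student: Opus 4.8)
\emph{Proof proposal.} The plan is to run an argument parallel in spirit to the proof of \autoref{thm:forSpheres}, but carried out entirely at the level of integral cohomology and Poincar\'e duality rather than homotopy groups; this is possible because $\mathbb{CP}^n$ has cells only in even degrees. As there, after an affine change of coordinates we may assume $f\colon M\to[-1,1]$ with $\mathbb{CP}^k=f^{-1}(-1)$ the minimum and $\mathbb{CP}^l=f^{-1}(1)$ the maximum. Morse--Bott theory then presents $M$ as the union of the two normal disk bundles $D(\nu_{\mathbb{CP}^k})$ and $D(\nu_{\mathbb{CP}^l})$ glued along their common boundary $M_0:=f^{-1}(0)$, which is simultaneously the sphere bundle $S(\nu_{\mathbb{CP}^k})$ and $S(\nu_{\mathbb{CP}^l})$. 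Thus $M_0$ carries two sphere bundle structures $\mathcal F_k\colon S^{d-2k-1}\to M_0\to\mathbb{CP}^k$ and $\mathcal F_l\colon S^{d-2l-1}\to M_0\to\mathbb{CP}^l$, and the negative gradient flow gives $M\smallsetminus\mathbb{CP}^k\simeq\mathbb{CP}^l$ and $M\smallsetminus\mathbb{CP}^l\simeq\mathbb{CP}^k$. One first disposes of the degenerate possibility $\dim\mathbb{CP}^l=d$ exactly as in \autoref{thm:forSpheres}, so that $2k\le 2l<d$.

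The core is the dimension count $d=2k+2l+2$. First, multiplicativity of the Euler characteristic in the two fibrations gives $\chi(S^{d-2k-1})(k+1)=\chi(M_0)=\chi(S^{d-2l-1})(l+1)$; were $d$ odd, both fibres would be even-dimensional spheres and this would force $k+1=l+1$, contradicting $k<l$, so $d$ is even. Next, excision and the Thom isomorphism identify $H^i(M,M\smallsetminus\mathbb{CP}^k)\cong H^{i-(d-2k)}(\mathbb{CP}^k)$ and $H^i(M,M\smallsetminus\mathbb{CP}^l)\cong H^{i-(d-2l)}(\mathbb{CP}^l)$, so the long exact sequences of these pairs relate $H^\ast(M)$ to $H^\ast(\mathbb{CP}^k)$ and $H^\ast(\mathbb{CP}^l)$. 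Comparing them in low degrees, $H^i(M)\cong H^i(\mathbb{CP}^l)$ for $i\le (d-2k)-2$ and $H^i(M)\cong H^i(\mathbb{CP}^k)$ for $i\le (d-2l)-2$; since $k<l$, these two descriptions can only be compatible in the overlapping range if $(d-2l)-2<2k+2$, i.e.\ $d\le 2k+2l+2$. For the reverse inequality, if $d\le 2k+2l$ then $r:=d-2l$ is even with $2\le r\le 2k$, and the first sequence computes $H^r(M)\cong H^r(\mathbb{CP}^l)\cong\mathbb{Z}$, while the second yields a short exact sequence $0\to H^0(\mathbb{CP}^l)\to H^r(M)\to H^r(\mathbb{CP}^k)\to 0$, i.e.\ $H^r(M)\cong\mathbb{Z}^2$ --- a contradiction. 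Hence $d=2k+2l+2$. The case $k=0$, where $\mathbb{CP}^0$ is a point, is treated separately and more directly: then $M_0=S^{d-1}$ is a sphere bundle over $\mathbb{CP}^l$, which by the Gysin sequence and the parity of $d$ forces the fibre to be $S^1$ and the bundle to be the Hopf bundle, so $d=2l+2$ and $M$ is the mapping cone of $S^{2l+1}\to\mathbb{CP}^l$, namely $\mathbb{CP}^{l+1}$.

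With $d=2k+2l+2$ fixed, $M$ is simply connected: $U=f^{-1}[-1,\tfrac12)\simeq\mathbb{CP}^k$ and $V=f^{-1}(-\tfrac12,1]\simeq\mathbb{CP}^l$ are simply connected, $U\cap V\simeq M_0$ is connected (the fibre $S^{2l+1}$ of $\mathcal F_k$ is connected), so van Kampen gives $\pi_1(M)=0$. For the cohomology ring I reuse the two exact sequences, now with $d-2k=2l+2$ and $d-2l=2k+2$: they give $H^i(M;\mathbb{Z})=\mathbb{Z}$ for every even $i$ with $0\le i\le d$ and $0$ otherwise, with no torsion. Moreover the restriction $H^\ast(M)\to H^\ast(M\smallsetminus\mathbb{CP}^k)\cong H^\ast(\mathbb{CP}^l)$ is a ring isomorphism in degrees $\le 2l$, so if $z$ generates $H^2(M)$ then $z^j$ generates $H^{2j}(M)$ for $j\le l$; Poincar\'e duality (the pairing $H^{2j}(M)\times H^{d-2j}(M)\to H^d(M)$ is unimodular, and $d-2j=2(k+l+1-j)$ lies in the previous range once $j\ge k+1$) then upgrades this to $z^j$ generating $H^{2j}(M)$ for all $0\le j\le k+l+1$. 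Hence $H^\ast(M;\mathbb{Z})\cong\mathbb{Z}[z]/(z^{k+l+2})$ with $\deg z=2$.

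Finally, to produce the homotopy equivalence, represent $z$ by a map $g\colon M\to\mathbb{CP}^\infty=K(\mathbb{Z},2)$; since $M$ has the homotopy type of a CW complex of dimension $d=2(k+l+1)$, cellular approximation compresses $g$ into the $2(k+l+1)$-skeleton, giving $\bar g\colon M\to\mathbb{CP}^{k+l+1}$ that sends the generator of $H^2$ to $z$. By the ring computation $\bar g^\ast$ is an isomorphism on $H^\ast(-;\mathbb{Z})$, hence on homology, and since $M$ and $\mathbb{CP}^{k+l+1}$ are simply connected, $\bar g$ is a homotopy equivalence. The main obstacle is the dimension count in the second paragraph: making the bookkeeping in the two long exact sequences airtight, and cleanly separating the $k=0$ case, is where the real work lies; everything after that is formal. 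In contrast with the real case, this argument does not invoke \autoref{thm:forSpheres}, and the same scheme will apply verbatim (replacing $2$ by $4$ throughout) to the quaternionic case.
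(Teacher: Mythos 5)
Your proposal is correct, and while its overall skeleton (dimension count, then the integral cohomology ring, then the $K(\mathbb{Z},2)$/cellular approximation/Whitehead finish) coincides with the paper's, the two technical steps are proved by a genuinely different route. For the dimension, the paper (\autoref{step-1CPn}) rules out odd $d$ by comparing the Serre spectral sequences of the two sphere-bundle structures on $M_0$, and then pins down $d=2k+2l+2$ through long exact sequences of homotopy groups, using a section of $\mathcal{F}_k$ once $d-2k>2k$; you instead get parity from multiplicativity of the Euler characteristic, $\chi(S^{d-2k-1})(k+1)=\chi(M_0)=\chi(S^{d-2l-1})(l+1)$, which is slicker, and both inequalities $d\le 2k+2l+2$ and $d\ge 2k+2l+2$ from the excision/Thom long exact sequences of the pairs $(M,M\smallsetminus\mathbb{CP}^k)$ and $(M,M\smallsetminus\mathbb{CP}^l)$ together with $M\smallsetminus\mathbb{CP}^k\simeq\mathbb{CP}^l$, $M\smallsetminus\mathbb{CP}^l\simeq\mathbb{CP}^k$ (the integral Thom isomorphism is legitimate because the normal bundles are orientable, their bases being simply connected --- worth saying explicitly, especially as your simple-connectivity of $M$ is stated only afterwards). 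For the ring, the paper (\autoref{step-2CPn}) first computes $H^\bullet(M_0)$ via the spectral sequence of $\mathcal{F}_k$ and a section, then runs Mayer--Vietoris and identifies generators by restricting to $\mathbb{CP}^1$; you bypass $M_0$ entirely, reading $H^\bullet(M)$ off the same two pair sequences and getting the multiplicative structure from the fact that restriction to $M\smallsetminus\mathbb{CP}^k\simeq\mathbb{CP}^l$ is a ring isomorphism in degrees $\le 2l$, followed by Poincar\'e duality. Your duality step is compressed: unimodularity only says some generator of $H^{2j}(M)$ pairs to the top class against $z^{k+l+1-j}$; the clean order is to first take $j$ in the nonempty overlap $k+1\le j\le l$ (here $k<l$ is used) to conclude that $z^{k+l+1}$ generates $H^{d}(M)$, and then, for $l<j\le k+l+1$, deduce that $z^j$ is primitive from $z^{j}\smile z^{k+l+1-j}$ being that generator --- comparable in brevity to the paper's own last step, so not a gap. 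Your $k=0$ case and the final homotopy-equivalence argument coincide with the paper's. The trade-off: your route avoids spectral sequences, homotopy groups and the section of $\mathcal{F}_k$ altogether, and transfers verbatim to \autoref{thm:forQuaternionicProjective}, whereas the paper's computation of $H^\bullet(M_0)$ in \eqref{eq:cohomologyRingOfM0} yields extra information about $M_0$ (used later in a remark) that your argument does not produce.
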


\noindent The condition $k\neq l$ is necessary as the following example illustrates. Let $M = \mathbb{CP} ^2 \times S^k$ and $f:M \to  \mathbb{R} $ be defined as 
\[
    f([a], (x_1,\ldots,x_{k+1})) = x_{k+1}.
\]
Then $f$ is a Morse-Bott function with critical submanifolds $\mathbb{CP} ^2\times\{\pm e_{k+1}\}$. However, $M$ is not homotopy equivalent to $\mathbb{CP}^5 $ for any choice of $k$.

\vspace{0.3cm}
\hspace{0.3cm} From \autoref{eg:morseBottFunctionOnSphere}, we can construct an example of a Morse-Bott function on $\mathbb{CP}^{k+l+1} $ with critical submanifold $\mathbb{CP}^k $ and $\mathbb{CP}^l $.   

\begin{eg}\label{eg:morseBottFunctionOnCP}
    Define a map 
    \[
        f : \mathbb{C} ^{k+l+2} \to  \mathbb{R} , \ \mathbf{z} = (z_0, \dots, z_{k+l+2}) \mapsto  - \sum_{i=0}^{k} \vert z_i \vert ^2 + \sum_{i=k+1}^{k+l+2} \vert z_i \vert ^2
    \]
    and consider its restriction on $S^{2k+2l+3}$. Then, for any $\theta \in \mathbb{R} $, $f\left( e^{\iota \theta } \mathbf{z}  \right) = f(\mathbf{z} )$ and hence $f$ is $S^1$-invariant. Therefore, it induces a well-defined map $\tilde{f} : \mathbb{CP}^{k+l+1} \to  \mathbb{R} $. For any $[\mathbf{z} ] = [z_0: z_1 : \dots : z_{k+l+1}]$  
    \[
        \tilde{f} [\mathbf{z} ] = {\displaystyle  - \sum_{i=0}^{k} \vert z_i \vert ^2 + \sum_{i=k+1}^{k+l+2} \vert z_i \vert ^2}.
    \] 
    Then $\tilde{f} $ is a Morse-Bott function with critical submanifolds $\mathbb{CP} ^k = \tilde{f} ^{-1} (-1)$ and $\mathbb{CP} ^l = \tilde{f} ^{-1} (1)$. \\
    \hspace*{0.5cm} The same construction works for real numbers as well as quaternions. For example, consider the function from \autoref{eg:morseBottFunctionOnSphere}. It induces a Morse-Bott function on $\mathbb{RP}^{k+l+1}$ with critical submanifolds $\mathbb{RP} ^k$ and $\mathbb{RP} ^l$.
\end{eg}

\begin{proof}[Proof of \autoref{thm:forComplexProjective}]
    Each connected component of $M$ will either be a critical submanifold of $f$ or have at least two connected critical submanifolds of $f$ inside it. As we only have two connected critical submanifolds of $f$, this forces $M$ to be connected. \\
    \hspace*{0.5cm}Equip $M$ with a Riemannian metric. Let $\nu_{\mathbb{CP} ^k}, \nu_{\mathbb{CP}^l}$ denote the normal bundles of $\mathbb{CP} ^k$ and $\mathbb{CP}^l$ in $M$ respectively. Without loss of generality, let us assume that 
    \[
        f^{-1} (-1) = \mathbb{CP} ^k,~f^{-1} (1) = \mathbb{CP} ^l \text{ and } f^{-1} (0) \eqqcolon M_0.
    \] 
    Let $D(\xi)$ and $S(\xi)$ denote the unit disk bundle and the sphere bundle of $\xi$. By basic results from Morse-Bott theory, we have diffeomorphisms between $M_0, S(\nu_{\mathbb{CP}^k})$ and $S(\nu_{\mathbb{CP}^l})$. Moreover, 
    As $f^{-1}[-1,\epsilon)$ and $f^{-1}(-\epsilon,1]$ deformation retract to $\mathbb{CP}^k$ and $\mathbb{CP}^l$ respectively, by Seifert-van Kampen Theorem, we infer that $M$ is simply connected. Look at the fibrations $\mathcal{F}_k$ and $\mathcal{F} _l$ over $\mathbb{CP} ^k$ and $\mathbb{CP} ^l$ respectively. 
    \[
        \left.
            \begin{tikzcd}
                S ^{d-2k-1} \arrow[r,hook] & M_0\cong S(\nu_{\mathbb{CP}^k}) \arrow[d,"\pi"]\\
                                                & \mathbb{CP} ^k
            \end{tikzcd}
            \kern 0.2cm 
        \right\} \mathcal{F}_k
        \kern 1cm
        \left.
            \begin{tikzcd}
                S ^{d-2l-1} \arrow[r,hook] & M_0\cong S(\nu_{\mathbb{CP}^l}) \arrow[d]\\
                                               & \mathbb{CP} ^l
            \end{tikzcd}
            \kern 0.2cm 
        \right\} \mathcal{F}_l
    \]
    Let us outline the proof in the following steps. 
    \begin{enumerate}[(1)]
        \item[] \textbf{Step 1:} $M$ is of dimension $2(k + l +1)$ (\autoref{step-1CPn}).
        \item[] \textbf{Step 2:} The integral cohomology ring $H^\bullet(M;\mathbb{Z})$ is isomorphic to $\mathbb{Z} [\alpha ]/(\alpha ^{k+l+2})$, where $\vert \alpha \vert = 2 $ (\autoref{step-2CPn}).   
    \end{enumerate}
    Assuming steps 1 and 2, we want to show the existence of a homotopy equivalence $g: M \to \mathbb{CP}^{d/2}$. As $M$ is simply connected and a CW-complex of dimension $d$, by Whitehead's theorem, it is enough to show that there exists $g: M \to  \mathbb{CP}^{d/2} $ inducing a ring isomorphism in cohomology. Due to homotopy-theoretic definition of cohomology groups, 
    \begin{equation}
        H^2(M;\mathbb{Z} ) = [M, \mathbb{CP}^{\infty}  ],
    \end{equation}
    where $\mathbb{CP}^\infty$ is a model for the the Eilenberg-MacLane space $K(\mathbb{Z},2)$. Choose an $f: M \to  \mathbb{CP}^{\infty}  $ such that $\alpha = f^{\ast} (a)$ generates $H^2(M;\mathbb{Z})$, where $a$ is the generator of the cohomology ring $H^\ast(\mathbb{CP}^{\infty};\mathbb{Z})$. Since $M$ is a CW-complex, by the cellular approximation theorem, there exists a cellular map $g: M \to  \mathbb{CP}^{\infty}  $ such that $g \sim f$. Cellularity of $g$ implies that $g$ factors through $\mathbb{CP}^{d/2} $, that is, the following diagram is commutative
    \[
        \begin{tikzcd}
            M \arrow[r, "g"] \arrow[dr, "\tilde{g}"'] & \mathbb{CP}^{\infty}\\
                                                    & \mathbb{CP}^{d/2}\arrow[u, hook, "i"']
        \end{tikzcd}
    \]    
    Now we claim that $\tilde{g} : M \to \mathbb{CP}^{d/2} $ induces ring isomorphism in integral cohomology. By \hyperref[step-2CPn]{Step 2}, we know that  $H^\bullet (M) \cong \mathbb{Z} [\alpha ]/(\alpha ^{k+l+2})$. Also, $H^\bullet ( \mathbb{CP}^{d/2}  ) \cong \mathbb{Z} [b]/(b^{\frac{d}{2}+1})$, where $b = i^{\ast} (a)$. But by definition of $f$, $f^{\ast} (a) = \alpha $. As $g \sim f$, $f^{\ast} (a) = g^{\ast} (a)$. Also, 
    \[
        g = i \,{\scriptstyle\circ}\, \tilde{g} \implies \tilde{g}^{\ast} \left( i^{\ast} (a) \right) = f^{\ast} (a) = \alpha  \implies \tilde{g}^{\ast} (b) = \alpha .    
    \]       
    This completes the proof the theorem.
\end{proof}

\begin{lemma}\label{step-1CPn}
    With the conditions as in \autoref{thm:forComplexProjective}, the dimension of $M$ is $2k+2l+2$.
\end{lemma}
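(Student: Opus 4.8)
The plan is to compute the total rational Betti number of $M_0 \defeq f^{-1}(0)$ in two ways and match them. Recall that $M_0$ is simultaneously the sphere bundle $\mathcal{F}_k$ over $\mathbb{CP}^k$, with fibre $S^a$ where $a \defeq d-2k-1$, and the sphere bundle $\mathcal{F}_l$ over $\mathbb{CP}^l$, with fibre $S^b$ where $b \defeq d-2l-1$.

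First, a few reductions. As $M$ is connected, if $\dim\mathbb{CP}^l=d$ then $M=\mathbb{CP}^l$ and $f$ is constant, contradicting $f(\mathbb{CP}^k)\neq f(\mathbb{CP}^l)$; hence $2l=\dim\mathbb{CP}^l<d$, so $a=d-2k-1\geq 2(l-k)\geq 2$. In particular the fibre of $\mathcal{F}_k$ is connected, so $M_0$ is connected, and both normal bundles are orientable because $H^1(\mathbb{CP}^j;\mathbb{Z}/2)=0$.

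The main step is the following computation. For an oriented $S^N$-bundle $E\to B$ over a connected finite complex, with rational Euler class $e\in H^{N+1}(B;\mathbb{Q})$, the rational Gysin sequence gives
\[
    \dim_{\mathbb{Q}} H^\bullet(E;\mathbb{Q}) \;=\; 2\dim_{\mathbb{Q}} H^\bullet(B;\mathbb{Q}) \;-\; 2\,\mathrm{rk}\!\left(\cup e \colon H^\bullet(B;\mathbb{Q})\to H^{\bullet+N+1}(B;\mathbb{Q})\right).
\]
Taking $B=\mathbb{CP}^k$, so that $H^\bullet(\mathbb{CP}^k;\mathbb{Q})=\mathbb{Q}[x]/(x^{k+1})$ with $|x|=2$: if $a$ is even then $H^{a+1}(\mathbb{CP}^k;\mathbb{Q})=0$ and $e=0$; if $a$ is odd then $e=c\,x^{(a+1)/2}$ (interpreting $x^{(a+1)/2}$ as $0$ when $(a+1)/2>k$), and $\mathrm{rk}(\cup e)$ equals $0$ when $c=0$ and equals $k-\tfrac{a+1}{2}+1=2k-\tfrac{d}{2}+1$ when $c\neq 0$. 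In every case
\[
    \dim_{\mathbb{Q}} H^\bullet(M_0;\mathbb{Q}) \in \{\,2(k+1),\ d-2k\,\},
\]
and the identical computation applied to $\mathcal{F}_l$ gives $\dim_{\mathbb{Q}} H^\bullet(M_0;\mathbb{Q})\in\{\,2(l+1),\ d-2l\,\}$.

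Finally, comparison. The integer $\dim_{\mathbb{Q}}H^\bullet(M_0;\mathbb{Q})$ lies in the intersection $\{2(k+1),\,d-2k\}\cap\{2(l+1),\,d-2l\}$, which is therefore nonempty. Of the four possible equalities, $2(k+1)=2(l+1)$ and $d-2k=d-2l$ each force $k=l$ and are excluded by hypothesis, whereas $2(k+1)=d-2l$ and $d-2k=2(l+1)$ each give $d=2k+2l+2$, as required. The only place that needs genuine care is the middle step: pinning the total rational Betti number of $M_0$ down to exactly these two candidate values across the case split ($a$ even versus odd, and the Euler class forced to vanish when $(a+1)/2>k$), and dealing with the degenerate case $k=0$, where $\mathbb{CP}^0$ is a point and $M_0\cong S^{d-1}$ --- this case is immediate. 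Alternatively, one can replace the cohomological computation by a comparison of the rational homotopy groups of $M_0$, which lie in degrees $2,a,2k+1$ as read from $\mathcal{F}_k$ and in degrees $2,b,2l+1$ as read from $\mathcal{F}_l$; matching the two degree sets again forces $d=2k+2l+2$, at the cost of some bookkeeping with coincidences among these degrees and with the connecting homomorphisms.
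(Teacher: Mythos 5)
Your argument is correct, and it takes a genuinely different route from the paper. The paper first proves $d$ is even by comparing the (degenerate) homology Serre spectral sequences of $\mathcal{F}_k$ and $\mathcal{F}_l$, then shows $d-2k>2k$ via homotopy exact sequences, and finally uses the resulting section of $\mathcal{F}_k$ together with the homotopy sequence of $\mathcal{F}_l$ and Serre's theorem on homotopy groups of spheres to force $2k+1=d-2l-1$. You instead make a single cohomological count: the rational Gysin sequence (legitimate here, since the normal bundles are orientable over the simply connected bases, and valid even in the rank-one case $d=2l+1$, where the oriented line bundle is trivial) pins the total rational Betti number of $M_0$ to $\{2(k+1),\,d-2k\}$ from $\mathcal{F}_k$ and to $\{2(l+1),\,d-2l\}$ from $\mathcal{F}_l$, and $k\neq l$ kills the two diagonal coincidences, leaving $d=2k+2l+2$; the parity of $d$ comes out as a byproduct rather than a separate step, and the $k=0$ case needs no special treatment since $e\in H^{d-2k}(\mathbb{CP}^0;\mathbb{Q})=0$ gives the same dichotomy. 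Your rank computation for $\cup\, e$ on $\mathbb{Q}[x]/(x^{k+1})$ and the resulting identity $2(k+1)-2\bigl(k-\tfrac{a+1}{2}+1\bigr)=d-2k$ are correct, so the core of the proof is complete and arguably shorter and more elementary than the paper's. What the paper's longer route buys is reusable intermediate output: the inequality $d-2k>2k$, the section of $\mathcal{F}_k$, the collapse of the spectral sequences, and the splitting $\pi_j(M_0)\cong\pi_j(\mathbb{CP}^k)\oplus\pi_j(S^{d-2k-1})$ are all used again in \autoref{step-2CPn} to determine the cohomology rings of $M_0$ and $M$, whereas your Betti-number count yields only the dimension. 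Finally, your closing alternative via rational homotopy groups is sketchier than you suggest (for an even-dimensional sphere fibre the rational homotopy also appears in degree $2a-1$, and the connecting maps matter), but since it is offered only as an aside, this does not affect the proof.
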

    
\begin{proof}

    At first, we claim that $d$ is even. If not, then $d-2k-1$ is even and we consider the second page of the Serre spectral sequence of $\mathcal{F} _k$, i.e., $E^2_{p,q}\cong H_p(\mathbb{CP}^k;\mathbb{Z})\otimes H_q(S^{d-2k-1};\mathbb{Z})$.
    \noindent It follows that there are no differentials in the $E^2$-page. In fact, there are no differentials in any page as $d-2k-1$ is even. Thus, 
    \[
        E^\infty _{p,q} \cong E^2_{p,q} \cong H_{p}(\mathbb{CP} ^k) \otimes H_q ( S^{d-2k - 1} ).
    \]   
    Similarly for the fibration $\mathcal{F} _l$, there are no differentials and we obtain 
    \[
        H_\bullet (\mathbb{CP}^k) \otimes H_\bullet (S ^{d-2k-1}) \cong  H_\bullet (M_0) \cong H_\bullet(\mathbb{CP}^l) \otimes H_\bullet (S ^{d-2l-1})
    \]
    as graded abelian groups. Now, consider the following homology,
    \begin{align*}
        H_{d-2l-1}(M_0) & \cong \big( H_{d-2l-1}(\mathbb{CP}^l ) \otimes _{\mathbb{Z} } H_0(S ^{d-2l-1}) \big) \oplus \big(H_0(\mathbb{CP}^l ) \otimes _{\mathbb{Z} } H_{d-2l-1}(S ^{d-2l-1}) \big)  \\
        & \cong H_{d-2l-1}(\mathbb{CP}^l ) \oplus \mathbb{Z} .
    \end{align*}
    We also have, 
    \begin{align*}
        H_{d-2l-1}(\mathbb{CP}^k) \cong  H_{d-2l-1}(M_0) \cong H_{d-2l-1}(\mathbb{CP}^l ) \oplus \mathbb{Z}.
    \end{align*}
    From the above relation, we have $d-2l-1 > 2l$ and $d-2l -1 \leq 2k <2l$, a contradiction. Thus, $d$ must be even. \\
    \hspace*{0.5cm} We now claim that $d-2k >2k$. If not, then we have
    \[
        d - 2l < d - 2k \leq 2k < 2l.
    \] 
    Look at the long exact sequence in the homotopy groups corresponding to $\mathcal{F} _l$:  
    \begin{align*}
        \mathcal{F} _l: & \kern 0.2cm \cdots \to \pi _{d-2l}(\mathbb{CP} ^l) \to  \pi_{d-2l-1}(S ^{d-2l-1}) \to \pi_{d-2l-1}(M_0) \to \pi_{d-2l-1}(\mathbb{CP}^l ) \to \cdots
    \end{align*} 
    In the above sequence, $\pi _{d-2l}(\mathbb{CP} ^l) \cong \pi_{d-2l}(S ^{2l+1}) \cong 0$ and $\pi _{d-2l-1}(\mathbb{CP} ^l) \cong \pi_{d-2l-1}(S ^{2l+1}) \cong 0$. Thus, 
    \[
        \pi_{d-2l-1}(M_0) \cong \pi_{d-2l-1}(S ^{d-2l-1}) \cong \mathbb{Z} .
    \]  
    We also have, from $\mathcal{F}_k$, the following: 
    \begin{center}
        \begin{tikzcd}[column sep=2em]
            \cdots \arrow[r] &  \cancelto{0}{\pi_{d-2l-1}(S^{d-2k-1})} \arrow[r] & \pi_{d-2l-1}(M_0) \arrow[r] & \cancelto{0}{\pi_{d-2l-1}(\mathbb{CP}^k)} \arrow[r] & \cdots
        \end{tikzcd}
    \end{center}
    where all the groups vanish because $d-2l < d- 2k\leq 2k$ and $\pi_{d-2l-1}(\mathbb{CP}^k)\cong\pi_{d-2l-1}(S^{2k+1})$. This implies that $\pi_{d-2l-1}(M_0)=0$, a contradiction. Hence, $d -2k > 2k$. \\
    \hspace*{0.5cm} Since $d-2k > 2k$, the fibration $\mathcal{F} _k$ has a section and as $d - 2k$ is even, $d- 2k \geq  2k + 2$. So, for any $j\geq 1$, 
    \begin{align*}
        \pi _j\left( M_0 \right) \cong \pi _j (\mathbb{CP}^k) \oplus \pi _j(S ^{d-2k-1}).
    \end{align*}    
    In particular, if $j= 2k + 1$, then
    \begin{align*}
        \pi _{2k+1}\left( M_0 \right) & \cong \pi _{2k+1} ( \mathbb{CP} ^{k}) \oplus \pi _{2k+1} (S ^{d-2k-1})\\
        & \cong
         \begin{cases}
            \mathbb{Z} \oplus \mathbb{Z} , &\text{ if } 2k + 1 = d-2k - 1 ;\\
            \mathbb{Z} , &\text{ otherwise}.
        \end{cases}
    \end{align*} 
    We now consider the long exact sequence of homotopy groups for $\mathcal{F} _l$. 
    \begin{center}
        \begin{tikzcd}[column sep= 2em]
            \cdots \arrow[r] & \pi _{2k+2}(\mathbb{CP}^l) \arrow[r] & \pi _{2k+1}(S ^{d-2l-1}) \arrow[r] & \pi _{2k+1}(M_0) \arrow[r] & \cancelto{0}{\pi _{2k+1}(\mathbb{CP}^l)} \arrow[r] & \cdots . 
        \end{tikzcd}
    \end{center}
    This implies that the map $\pi _{2k+1}(S ^{d-2l-1}) \to \pi _{2k+1}(M_0)$ must be surjective. Since $\pi _{2k+1}(M_0)$ is either $\mathbb{Z}$ or $\mathbb{Z} \oplus \mathbb{Z} $, we must have $\pi _{2k+1}(S ^{d-2l-1}) \cong \mathbb{Z}\cong\pi_{2k+1}(M_0) $. Therefore, $2k + 1 =d-2l-1$, which implies $d = 2k + 2l + 2$. 
    \end{proof} 
    \begin{lemma}\label{step-2CPn}
        With conditions as in \autoref{thm:forComplexProjective}, the ring $H^\bullet(M;\mathbb{Z})$ is $\mathbb{Z}[\alpha]/(\alpha^{k+l+2})$, where $|\alpha|=2$.  
    \end{lemma}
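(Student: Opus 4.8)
The plan is to determine $H^\bullet(M;\mathbb{Z})$ first as a graded group, then as a ring. Recall from the proof of \autoref{thm:forComplexProjective} that $M$ is simply connected (hence orientable), and from \autoref{step-1CPn} that $d=2k+2l+2$. Since $M_0\cong S(\nu_{\mathbb{CP}^k})$ is the sphere bundle of the oriented rank $2l+2$ bundle $\nu_{\mathbb{CP}^k}$ over $\mathbb{CP}^k$, and its Euler class lies in $H^{2l+2}(\mathbb{CP}^k)=0$ (as $2l+2>2k$), the Gysin sequence breaks into split short exact sequences, giving $H^n(M_0)\cong H^n(\mathbb{CP}^k)\oplus H^{n-2l-1}(\mathbb{CP}^k)$. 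Since $2k<2l+1$ the two summands are never simultaneously nonzero, so $H^n(M_0)\cong\mathbb{Z}$ exactly when $n$ is even with $0\le n\le 2k$ or $n$ is odd with $2l+1\le n\le 2k+2l+1$, and vanishes otherwise; moreover the bundle projection induces an isomorphism $H^{2j}(\mathbb{CP}^k)\to H^{2j}(M_0)$ for $0\le 2j\le 2k$, and (using instead $M_0\cong S(\nu_{\mathbb{CP}^l})$, whose fibre is $S^{2k+1}$) the projection to $\mathbb{CP}^l$ likewise induces an isomorphism $H^{2j}(\mathbb{CP}^l)\to H^{2j}(M_0)$ for $0\le 2j\le 2k$.

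Next I would run the Mayer--Vietoris sequence for a cover $M=U\cup V$ with $U\simeq\mathbb{CP}^k$, $V\simeq\mathbb{CP}^l$ and $U\cap V\simeq M_0$ (as in the proof of \autoref{thm:forComplexProjective}),
\[
\cdots\to H^{n-1}(M_0)\xrightarrow{\ \delta\ }H^n(M)\xrightarrow{(\iota_U^\ast,\iota_V^\ast)}H^n(\mathbb{CP}^k)\oplus H^n(\mathbb{CP}^l)\xrightarrow{\ \psi_n\ }H^n(M_0)\to\cdots,
\]
where $\psi_n$ is the difference of the two bundle-projection maps from the previous paragraph. A routine degree-by-degree chase — for $2j\le 2k$ the map $\psi_{2j}$ is $(a,b)\mapsto\pm a\mp b$ from $\mathbb{Z}^2$ onto $\mathbb{Z}$, hence surjective with kernel $\mathbb{Z}$; for $2k<n\le 2l$ the group $H^n(M_0)$ vanishes while $H^n(\mathbb{CP}^k)=0$; and above degree $2l$ everything is controlled by $H^\bullet(M_0)$ via $\delta$ — yields $H^n(M;\mathbb{Z})\cong\mathbb{Z}$ for every even $n$ with $0\le n\le d$ and $0$ otherwise. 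The same chase also shows that the restriction $\iota_V^\ast:H^{2j}(M)\to H^{2j}(\mathbb{CP}^l)$ is an isomorphism for all $0\le j\le l$.

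For the ring structure, choose $\alpha\in H^2(M)$ with $\iota_V^\ast(\alpha)$ a generator of $H^2(\mathbb{CP}^l)$. Since $\iota_V^\ast$ is a ring map and $H^\bullet(\mathbb{CP}^l)=\mathbb{Z}[b]/(b^{l+1})$ with $\iota_V^\ast(\alpha)=b$, we get $\iota_V^\ast(\alpha^j)=b^j$, a generator of $H^{2j}(\mathbb{CP}^l)$; as $\iota_V^\ast$ is an isomorphism in these degrees, $\alpha^j$ generates $H^{2j}(M)$ for $0\le j\le l$. To cover the remaining degrees I use Poincaré duality: $M$ is closed, orientable, and all its cohomology is free, so the cup-product pairing $H^i(M)\times H^{d-i}(M)\to H^d(M)\cong\mathbb{Z}$ is unimodular. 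Because $k<l$, both $\alpha^{k+1}$ and $\alpha^{l}$ are generators (of $H^{2k+2}(M)$ and $H^{2l}(M)$), and since $2(k+1)+2l=d$, unimodularity forces $\alpha^{k+l+1}=\alpha^{k+1}\alpha^{l}$ to generate $H^d(M)$. Then for each $1\le j\le k+1$ the class $\alpha^{k+1-j}$ is a generator (as $0\le k+1-j\le k\le l$); writing $\alpha^{l+j}=c\,g$ with $g$ a generator of $H^{2(l+j)}(M)$, and using unimodularity of the pairing $H^{2(l+j)}(M)\times H^{2(k+1-j)}(M)\to H^d(M)$ together with $\alpha^{l+j}\alpha^{k+1-j}=\alpha^{k+l+1}$ (a generator), we conclude $c=\pm1$. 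As $j$ runs through $1,\dots,k+1$ this shows $\alpha^{l+1},\dots,\alpha^{k+l+1}$ are generators; combined with the case $j\le l$, $\alpha^j$ generates $H^{2j}(M)$ for all $0\le j\le k+l+1$, while $\alpha^{k+l+2}\in H^{d+2}(M)=0$. Hence $H^\bullet(M;\mathbb{Z})\cong\mathbb{Z}[\alpha]/(\alpha^{k+l+2})$ with $|\alpha|=2$.

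The main obstacle is the multiplicative structure, not the additive computation: the additive cohomology alone does not determine the ring — for instance $S^2\times S^4$ has the additive cohomology of $\mathbb{CP}^3$ but a different product — so one must use the geometry. The decisive point is the hypothesis $k<l$: it makes the powers $\alpha^0,\dots,\alpha^l$ accessible by restriction to the critical submanifold $\mathbb{CP}^l$, and this range is long enough that Poincaré duality pairs it against every still-undetermined power $\alpha^{l+1},\dots,\alpha^{k+l+1}$. One should also double-check the degenerate cases $k=0$ (where $\mathbb{CP}^0$ is a point and $M_0=S^{2l+1}$) and $l=k+1$ (where the two generators multiplied in the first duality step coincide); both go through verbatim.
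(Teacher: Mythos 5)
Your proof is correct and follows essentially the same route as the paper: determine $H^\bullet(M_0)$ additively from the sphere-bundle structure (you use Gysin sequences where the paper uses the Serre spectral sequence), run Mayer--Vietoris on the cover $U\simeq\mathbb{CP}^k$, $V\simeq\mathbb{CP}^l$ to obtain the additive cohomology of $M$ together with the fact that restriction to $\mathbb{CP}^l$ detects $\alpha^j$ for $j\le l$, and finish with Poincar\'e duality on $M$. Your write-up is in fact slightly more explicit than the paper's at the end (pairing each power $\alpha^{l+j}$ against $\alpha^{k+1-j}$ to see it generates) and treats $k=0$ uniformly rather than as a separate case.
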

    \begin{proof}
    If $k=0$, then $M_0$ is a sphere and of dimension $2l+1$ by \autoref{step-1CPn}. Note that $M_0\cong S^{2l+1}$ is the total space of fibration over $\mathbb{CP}^l$, i.e., the Hopf fibration. This matches with the construction of $\mathbb{CP}^{l+1}$, obtained by gluing a $(2l+2)$-cell to the total space of the Hopf fibration. Thus, in this case, $M$ is homeomorphic to $\mathbb{CP}^{l+1}$.\\
    \hspace*{0.5cm}We may assume that $k\geq 1$. As $d-2k-1 = 2l + 1$, we look at the second page of the (cohomology) spectral sequence for the fibration $\mathcal{F} _k$. We conclude that 
    \[
        E_2^{p, q} = 
        \begin{cases}
            \mathbb{Z}, & 0 \leq p \leq 2k,\, p \text{ is even and } q \in \{ 0, 2l+1 \} \\ 
            0, & \text{ otherwise}.
        \end{cases}
    \] 
    While seeking potential non-zero differentials, we need $r$ so that $2p + r$ is an even number less than $2k$ and $2l + 1 - r + 1 = 0$ for $p = 0,2,\dots, 2k$. This implies $r = 2l+2$ and $2p + r = 2p + 2l + 2 \in \{ 0,2,\dots, 2k \} $. Therefore, 
    \begin{align*}
        & 0 \leq p + l + 1 \leq k \text{ for } p = 0,2,\dots, 2k \implies l + 1 \leq k,
    \end{align*}   
    which is not possible. Hence, there is no differential in any page and we have an isomorphism of graded abelian groups
    \begin{equation}\label{eq:cohomologyGroupOfM0}
        H^\bullet(M_0) \cong H^\bullet (\mathbb{CP}^k ) \otimes H^\bullet(S ^{2l+1}).
    \end{equation}
    Since the fibration $\mathcal{F} _k$ has a section, say $s$, we have 
    \[
        \begin{tikzcd}
            H^\bullet (\mathbb{CP}^k) \arrow[r, hook, "\pi^{\ast}"] \arrow[rr, bend right, "\textup{id} "] & H^\bullet(M_0) \arrow[r, "s^{\ast}"] & H^\bullet(\mathbb{CP}^k).
        \end{tikzcd}        
    \]
    We know that the cohomology ring $H^\bullet (\mathbb{CP}^k ) \cong \mathbb{Z} [a ]/(a^{k+1} )$, where $\vert a  \vert =2 $. Thus, $\pi^{\ast} (a ^k) \in H^{2k} (M_0)$ is a generator. Since $M_0$ is simply connected, it is oriented. Poincar\'{e} duality implies that there exists $\beta \in H^{2l+1}(M_0)$ such that $\pi ^{\ast} (\alpha ^k) \smile \beta $ is in the top class of $H^\bullet(M_0)$. This implies $\pi ^{\ast} (\alpha ^j) \smile \beta \neq 0$ for $k\geq j\geq 0$. Since $\beta $ is a generator of $H^{2l+1}(S ^{2l+1})$, we have $\beta \smile \beta =0$. Thus, we have the following isomorphism of rings 
    \begin{equation}\label{eq:cohomologyRingOfM0}
        H^\bullet(M_0) \cong H^\bullet(\mathbb{CP}^k) \otimes H^\bullet(S ^{2l+1}).
    \end{equation}    
    \hspace*{0.5cm}We consider the open cover $U=f^{-1}[-1,1/2)$ and $V=f^{-1}(-1/2,1]$. Observe that   
    \[
        U \simeq \mathbb{CP}^k, \quad V \simeq \mathbb{CP}^l,\quad U  \cap V \simeq M_0.   
    \] 
    Then from the Mayer-Vietoris sequence applied to the cover $\{U,V\}$ of $M$, we have the following long exact sequence. 
    \begin{equation}\label{MV-for-M}
        \cdots \rightarrow H^j(M_0) \xrightarrow{\delta} H^{j+1}(M) \rightarrow H^{j+1}(\mathbb{CP}^k) \oplus H^{j+1}(\mathbb{CP}^{l}) \rightarrow H^{j+1}(M_0) \rightarrow H^{j+2}(M)\rightarrow \cdots .
    \end{equation}
    If $j\geq 2l$, then \eqref{MV-for-M} implies that the cohomology of $M$ in degree at least $2l+1$ is zero in odd degrees and $\Z$ in even degrees. \\
    \hspace*{0.5cm}Let us put $j= 1$ in \eqref{MV-for-M}. 
    

    \begin{center}
        \begin{tikzcd}
            0 \arrow[r, "\delta"] & H^2(M) \arrow[r, "\iota"] & \mathbb{Z} \oplus \mathbb{Z} \arrow[r, "j"] & \mathbb{Z} \arrow[r] & H^3(M) \arrow[r] & 0.
        \end{tikzcd}


            
            
    \end{center}
    The generator $a\in H^2(\mathbb{CP}^k)$ maps onto the generator of $H^2(M_0)$ due to \eqref{eq:cohomologyGroupOfM0} and $\mathcal{F}_k$ admitting a section. Thus, $H^3(M)=0$ and $H^2(M)\cong\mathbb{Z}$, generated by $\alpha$. Let $b$ generate the cohomology of $H^\bullet(\mathbb{CP}^l)$. This element $b$ arises from $\iota:\mathbb{CP}^1\hookrightarrow \mathbb{CP}^l$, i.e, $\iota^\ast(b)$ is the generator of $H^2(\mathbb{CP}^1;\mathbb{Z})$. Consider the commutative diagram 
    \begin{center}
        \begin{tikzcd}[column sep = 1cm]
            M_0\big|_{\mathbb{CP}^1} & M_0 &  \nu_{\mathbb{CP}^l} \\
            \mathbb{CP}^1  & \mathbb{CP} ^l & \mathbb{{CP}}^l 
            \arrow["", hook, from = 1-1, to = 1-2]
            \arrow["",  hook, from = 1-2, to = 1-3]
            \arrow["", hook, from = 2-1, to = 2-2]
            \arrow["\textup{id}", from = 2-2, to = 2-3]
            \arrow["", from = 1-1, to = 2-1]
            \arrow["\pi", from = 1-2, to = 2-2]
            \arrow["", from = 1-3, to = 2-3]
        \end{tikzcd}
    \end{center}
    The leftmost vertical arrow admits a section while the rightmost vertical arrow is a homotopy equivalence via the zero section. Thus, all induced maps in $H^2$ are isomorphisms and $b\in H^2(\mathbb{CP}^l)$ maps to the generator of $H^2(M_0)$. Thus, kernel of $j$ is given by $(a,b)$ and $\iota(\alpha)=(a,b)$. \\
    \hspace*{0.5cm}As $H^r(M_0)$ is that of $H^r(\mathbb{CP}^k)$ for $r\leq 2k$, from the Mayer-Vietoris sequence (with $2r\leq 2k$)
    \[
        0 \longrightarrow H^{2r}(M) \longrightarrow H^{2r}(\mathbb{CP}^k) \oplus H^{2r}(\mathbb{CP}^{l}) \longrightarrow H^{2r}(M_0) \longrightarrow 0
    \]
    we deduce that that that the cohomology of $M$ in degree at most $2k+1$ is zero in odd degrees and $\Z$ in even degrees. If $2k<j< 2l$, then $H^j(M_0), H^{j+1}(M_0)$ and $H^{j+1}(\mathbb{CP}^k)$ are zero. This forces $H^{j+1}(M)$ to be isomorphic to $H^{j+1}(\mathbb{CP}^l)$. In particular, $H^{2r}(M)=\mathbb{Z}\alpha^r$, if $r\leq l$. By Poincar\'e duality, there exists $\beta\in H^{2k+2}(M)$ such that $\alpha^l\cup \beta$ generates $H^d(M)$. This forces $\beta\in\{\pm \alpha^{k+1}\}$ and the claim about the ring structure of $H^\bullet(M)$ follows.
    \end{proof}

\hspace*{0.5cm}Arguments similar to those used in \autoref{thm:forComplexProjective} may be used to prove the quaternionic case.
\begin{theorem}\label{thm:forQuaternionicProjective}
    Let $M$ be a closed, smooth manifold of dimension $d$. Let $f$ be a Morse-Bott function on $M$ with only critical submanifolds $\mathbb{HP} ^k$ and $\mathbb{HP} ^l$ with $k<l$. Then $d = 4(k + l + 1)$ and $M$ is homotopic to $\mathbb{HP} ^{k+l+1}$.
\end{theorem}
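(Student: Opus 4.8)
The plan is to follow the blueprint of \autoref{thm:forComplexProjective} essentially verbatim, replacing $\mathbb{C}$ by $\mathbb{H}$ throughout and $\mathbb{CP}^n$ by $\mathbb{HP}^n$, with the degree $2$ generator $\alpha$ now in degree $4$. First I would argue, exactly as in the complex case, that $M$ is connected (each component of $M$ contains a critical submanifold, and there are only two, both needed to provide a minimum and maximum on whatever component carries them). After choosing a metric and normalizing $f$ so that $\mathbb{HP}^k = f^{-1}(-1)$, $\mathbb{HP}^l = f^{-1}(1)$ and $M_0 = f^{-1}(0)$, Morse-Bott theory gives diffeomorphisms $M_0 \cong S(\nu_{\mathbb{HP}^k}) \cong S(\nu_{\mathbb{HP}^l})$, hence the two sphere fibrations $\mathcal{F}_k\colon S^{d-4k-1}\hookrightarrow M_0 \to \mathbb{HP}^k$ and $\mathcal{F}_l\colon S^{d-4l-1}\hookrightarrow M_0 \to \mathbb{HP}^l$. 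Since $f^{-1}[-1,\epsilon)$ and $f^{-1}(-\epsilon,1]$ deformation retract onto $\mathbb{HP}^k$ and $\mathbb{HP}^l$, which are simply connected, Seifert-van Kampen shows $M$ is simply connected. The whole argument then splits into the two steps: Step 1, $d = 4(k+l+1)$; Step 2, $H^\bullet(M;\mathbb{Z}) \cong \mathbb{Z}[\alpha]/(\alpha^{k+l+2})$ with $|\alpha|=4$; and then the Whitehead/cellular-approximation argument producing a homotopy equivalence $M \to \mathbb{HP}^{d/4}$ goes through word for word, using that $\mathbb{HP}^\infty = K(\mathbb{Z},4)$ up to the relevant dimension (more precisely, that the inclusion $\mathbb{HP}^\infty \hookrightarrow K(\mathbb{Z},4)$ is an isomorphism on $H^4$ and that cellular approximation lets $g\colon M\to \mathbb{HP}^\infty$ factor through $\mathbb{HP}^{d/4}$).

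For Step 1, I would mirror \autoref{step-1CPn}. First show $d$ is divisible by $4$: if not, then $S^{d-4k-1}$ is even-dimensional (since $4k$ is even), so the Serre spectral sequence of $\mathcal{F}_k$ collapses (differentials would have to connect an even row to the zero row, impossible for degree reasons), and likewise for $\mathcal{F}_l$, giving $H_\bullet(\mathbb{HP}^k)\otimes H_\bullet(S^{d-4k-1}) \cong H_\bullet(M_0) \cong H_\bullet(\mathbb{HP}^l)\otimes H_\bullet(S^{d-4l-1})$. Computing $H_{d-4l-1}(M_0)$ from the right factorization gives a free summand $H_{d-4l-1}(\mathbb{HP}^l)\oplus \mathbb{Z}$, while from the left it equals $H_{d-4l-1}(\mathbb{HP}^k)$; comparing supports forces $d-4l-1 > 4l$ and $d-4l-1 \le 4k < 4l$, a contradiction. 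Hence $4 \mid d$. (One should double-check the parity bookkeeping here: $d$ odd is not the only case to rule out; what the complex proof really uses is that $d-2k-1$ even forces collapse, and here the analogue is that $d \not\equiv 0 \pmod 4$ need not make $d-4k-1$ even — indeed $d$ even but $d\equiv 2\pmod 4$ still gives $d-4k-1$ odd. So the clean statement to prove first is just "$d$ is even," by the same spectral-sequence collapse argument applied when $d-4k-1$ is even, i.e. when $d$ is odd; if $d$ is even we proceed differently.) In any case, once $d$ is even one shows $d-4k > 4k$ via the long exact homotopy sequences: if $d-4l < d-4k \le 4k < 4l$ then $\pi_{d-4l-1}(\mathbb{HP}^l) \cong \pi_{d-4l-1}(S^{4l+3}) = 0$ and $\pi_{d-4l}(\mathbb{HP}^l) = 0$ in the relevant range force $\pi_{d-4l-1}(M_0)\cong\mathbb{Z}$ from $\mathcal{F}_l$, while $\mathcal{F}_k$ forces it to vanish. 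So $d-4k > 4k$, hence $\mathcal{F}_k$ has a section (the normal bundle $\nu_{\mathbb{HP}^k}$ has rank $d-4k > 4k = \dim\mathbb{HP}^k$, so it splits off a trivial summand of positive rank), and $\pi_j(M_0) \cong \pi_j(\mathbb{HP}^k)\oplus\pi_j(S^{d-4k-1})$ for all $j\ge 1$. Taking $j = 4k+3 = \dim S^{4k+3}$ — here $S^{4k+3}$ is the Hopf sphere over $\mathbb{HP}^k$, and $\pi_{4k+3}(\mathbb{HP}^k) \cong \pi_{4k+3}(S^{4k+3}) \cong \mathbb{Z}$ via the Hopf fibration $S^3 \hookrightarrow S^{4k+3}\to\mathbb{HP}^k$ — one finds $\pi_{4k+3}(M_0)$ is $\mathbb{Z}$ or $\mathbb{Z}^2$, and the long exact sequence of $\mathcal{F}_l$ forces $\pi_{4k+3}(S^{d-4l-1})$ to surject onto it, so $d-4l-1 = 4k+3$, i.e. $d = 4k+4l+4 = 4(k+l+1)$. (The case $k=0$, where $\mathbb{HP}^0$ is a point, should be handled separately: then $M_0 \cong S^{d-1}$ is a Hopf sphere over $\mathbb{HP}^l$, forcing $d-1 = 4l+3$ and $M \cong \mathbb{HP}^{l+1}$ directly by the cell-attachment description, just as in the complex proof.)

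For Step 2, I would copy \autoref{step-2CPn}. Assuming $k\ge 1$ (the $k=0$ case being done), we have $d-4k-1 = 4l+3$, so the cohomology Serre spectral sequence of $\mathcal{F}_k$ has $E_2^{p,q} = \mathbb{Z}$ for $p \in \{0,4,8,\dots,4k\}$ and $q\in\{0,4l+3\}$, zero otherwise; a possible differential would need to go from $E_r^{p,4l+3}$ to $E_r^{p+r,0}$ with $r = 4l+4$ and $p+4l+4 \le 4k$, forcing $l+1 \le k$, impossible. So the spectral sequence collapses and $H^\bullet(M_0) \cong H^\bullet(\mathbb{HP}^k)\otimes H^\bullet(S^{4l+3})$ as graded groups; using the section $s$ of $\mathcal{F}_k$ and Poincaré duality on the simply connected (hence oriented) $M_0$ — with $\pi^\ast(a^k) \smile \beta$ a top class for suitable $\beta \in H^{4l+3}(M_0)$ and $\beta\smile\beta = 0$ — the ring structure is $H^\bullet(M_0)\cong \mathbb{Z}[a]/(a^{k+1}) \otimes \Lambda[\beta]$ with $|a|=4$, $|\beta| = 4l+3$. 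Then apply Mayer-Vietoris to $U = f^{-1}[-1,1/2) \simeq \mathbb{HP}^k$, $V = f^{-1}(-1/2,1] \simeq \mathbb{HP}^l$, $U\cap V \simeq M_0$, exactly as in the complex case: in low degrees ($\le 4k+1$) one gets $H^{4r}(M) = \mathbb{Z}$, odd-degree cohomology zero, with $H^4(M) = \mathbb{Z}\alpha$ and the generators $a \in H^4(\mathbb{HP}^k)$, $b\in H^4(\mathbb{HP}^l)$ both restricting to the generator of $H^4(M_0)$ (for $b$ one uses the diagram pulling back along $\mathbb{HP}^1 \hookrightarrow \mathbb{HP}^l$, whose restricted bundle has a section); in the middle range $4k < j < 4l$ one gets $H^{j+1}(M) \cong H^{j+1}(\mathbb{HP}^l)$, so $H^{4r}(M) = \mathbb{Z}\alpha^r$ for $r\le l$; and in high degrees $\ge 4l+1$ one reads off $\mathbb{Z}$ in degrees divisible by $4$ and zero otherwise. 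Finally Poincaré duality on $M$ gives $\beta' \in H^{4(k+1)}(M)$ with $\alpha^l \cup \beta'$ a generator of $H^d(M)$, forcing $\beta' = \pm\alpha^{k+1}$ and $H^\bullet(M;\mathbb{Z}) \cong \mathbb{Z}[\alpha]/(\alpha^{k+l+2})$, $|\alpha|=4$. The main obstacle is purely bookkeeping: keeping the parity/divisibility-by-$4$ arithmetic straight throughout Step 1 (in particular isolating exactly the statement "$d$ is even" before bootstrapping to "$4\mid d$"), and making sure the low-dimensional Hopf-fibration identifications $\pi_{4k+3}(\mathbb{HP}^k)\cong\mathbb{Z}$ and the section-existence arguments are stated correctly — conceptually nothing new beyond the complex case is needed.
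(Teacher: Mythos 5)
Your overall route is the one the paper intends: its ``proof'' of \autoref{thm:forQuaternionicProjective} is literally the remark that arguments similar to \autoref{thm:forComplexProjective} apply, so filling in the quaternionic analogues of \autoref{step-1CPn} and \autoref{step-2CPn} is the right plan, and your parity bookkeeping (first ``$d$ even'', then $4\mid d$) is a sensible refinement. However, several of your transported homotopy-group statements are false, because the quaternionic Hopf fibre is $S^3$, not $S^1$. From $S^3\hookrightarrow S^{4m+3}\to \mathbb{HP}^m$ one gets $\pi_j(\mathbb{HP}^m)\cong\pi_{j-1}(S^3)$ for $j<4m+3$, not $\pi_j(S^{4m+3})$. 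Thus $\pi_{4k+3}(\mathbb{HP}^k)$ is only $\mathbb{Z}\oplus(\text{finite})$ (e.g.\ $\pi_7(\mathbb{HP}^1)=\pi_7(S^4)\cong\mathbb{Z}\oplus\mathbb{Z}/12$), $\pi_{4k+3}(\mathbb{HP}^l)\cong\pi_{4k+2}(S^3)$ is typically a nonzero finite group (so your surjectivity of $\pi_{4k+3}(S^{d-4l-1})\to\pi_{4k+3}(M_0)$ is not literally available), and $\pi_{d-4l}(\mathbb{HP}^l)\cong\pi_{d-4l-1}(S^3)$ need not vanish in the step ``$d-4k>4k$''. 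Most of this is repairable by arguing with ranks, since the discrepancies are finite: the image of $\pi_{4k+3}(S^{d-4l-1})$ has finite index in a group of rank $\geq 1$, which still forces $d-4l-1=4k+3$ once $d$ is even. But one subcase genuinely escapes the finite-error argument: if $d-4l-1=3$ (i.e.\ $d=4l+4$) in the proof that $d-4k>4k$, then $\pi_{d-4l}(\mathbb{HP}^l)=\pi_4(\mathbb{HP}^l)\cong\mathbb{Z}$ and the boundary map can hit $\pi_3$ of the fibre, so no contradiction appears at that degree; you need a separate argument there (for instance, $\mathcal{F}_k$ makes $M_0$ $3$-connected with $H^4(M_0)\cong\mathbb{Z}$, while the Gysin sequence of the $S^3$-bundle $\mathcal{F}_l$ then forces the Euler class to vanish and $H^3(M_0)\cong\mathbb{Z}$, a contradiction).

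The more serious gap is your final step. In the complex case the homotopy equivalence is produced from $H^2(M;\mathbb{Z})=[M,\mathbb{CP}^\infty]$, which uses that $\mathbb{CP}^\infty$ \emph{is} $K(\mathbb{Z},2)$. But $\mathbb{HP}^\infty=BS^3$ is not $K(\mathbb{Z},4)$, and $[M,\mathbb{HP}^\infty]\neq H^4(M;\mathbb{Z})$ in general: the class $\alpha$ only gives a map $M\to K(\mathbb{Z},4)$, and factoring it through $\mathbb{HP}^\infty$ means lifting along $BS^3\to K(\mathbb{Z},4)$, whose homotopy fibre has nontrivial homotopy groups $\pi_{i}\cong\pi_{i-1}(S^3)$ for $i\geq 5$; the lifting obstructions live in groups such as $H^{8}(M;\pi_6(S^3))=H^8(M;\mathbb{Z}/12)$, which do not vanish for degree reasons when $d\geq 8$. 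Your parenthetical fix (``$\mathbb{HP}^\infty\hookrightarrow K(\mathbb{Z},4)$ is an isomorphism on $H^4$, then cellular approximation'') does not produce the needed map $M\to\mathbb{HP}^\infty$; cellular approximation only helps once such a map exists. And the issue is not cosmetic: the integral cohomology ring $\mathbb{Z}[\alpha]/(\alpha^{n+1})$, $|\alpha|=4$, does not by itself determine the homotopy type (there are Hopf-invariant-one complexes $S^4\cup e^8$ that are not homotopy equivalent to $\mathbb{HP}^2$), so Steps 1--2 plus a formal Whitehead argument cannot be the whole proof; some additional geometric input --- e.g.\ exploiting that $M$ is the union of the disc bundles $D(\nu_{\mathbb{HP}^k})$ and $D(\nu_{\mathbb{HP}^l})$ glued along $M_0$, or a direct construction of a suitable map to $\mathbb{HP}^{k+l+1}$ --- is required. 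To be fair, the paper's one-line proof does not address this point either, but as written your proposal asserts the step goes through ``word for word'', and it does not.
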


\hspace*{0.5cm}The analogous statement for the real projective spaces is also true (see \autoref{thm:forRealProjective} below). However, the presence of non-zero fundamental groups force us to modify the arguments appropriately. We will consider homology and cohomology with coefficients in $\mathbb{Z} _2$. This enables us to apply Poincar\'{e} duality with $\mathbb{Z}_2$-coefficients. 

\begin{theorem}\label{thm:forRealProjective}
    Let $M$ be a closed, smooth manifold of dimension $d$. Let $f$ be a Morse-Bott function on $M$ with only critical submanifolds $\mathbb{RP} ^k$ and $\mathbb{RP} ^l$ with $k<l$. Then $d = k +l + 1$ and $M$ is homotopic to $\mathbb{RP} ^{k+l+1}$.
\end{theorem}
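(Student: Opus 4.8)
The plan is to reduce everything to \autoref{thm:forSpheres} by passing to a carefully chosen double cover. Write $N_k=f^{-1}(-1)=\mathbb{RP}^k$, $N_l=f^{-1}(1)=\mathbb{RP}^l$, $M_0=f^{-1}(0)$, and let $\mathcal F_k\colon S^{d-k-1}\hookrightarrow M_0\to\mathbb{RP}^k$ and $\mathcal F_l\colon S^{d-l-1}\hookrightarrow M_0\to\mathbb{RP}^l$ be the two sphere bundles arising from the normal bundles, exactly as in the proof of \autoref{thm:forSpheres}. As in \autoref{thm:forComplexProjective}, $M$ is connected; and since $M_0$ is a sphere bundle over a connected base with fibre of dimension $d-k-1\ge 1$ (the case $d=k+1$ is impossible, as it would force $l\le k$), $M_0$ is connected too.

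The first and main step is to produce a class $w\in H^1(M;\mathbb Z_2)$ whose restriction to each of $\mathbb{RP}^k$ and $\mathbb{RP}^l$ is the nonzero class. Using the $\mathbb Z_2$-Serre spectral sequence of $\mathcal F_k$ (whose $E_2$-page splits, because mod $2$ every self-map of a sphere acts trivially on $\mathbb Z_2$-cohomology), one sees that $\pi_1(M_0)\to\pi_1(\mathbb{RP}^k)$ is onto, hence $\pi_k^\ast\colon H^1(\mathbb{RP}^k;\mathbb Z_2)\to H^1(M_0;\mathbb Z_2)$ is injective. Arguing as in \autoref{M-dim} one rules out $d-k-1=1$ (it would make $M_0$ a double cover of $\mathbb{RP}^l$, so $M_0\cong S^l$, contradicting $H^1(M_0;\mathbb Z_2)\neq 0$), so $d\ge k+3$ and therefore $H^1(M_0;\mathbb Z_2)\cong\mathbb Z_2$, generated by $\pi_k^\ast(a)$. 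When $k\ge 1$ a similar fibration argument forbids $d=l+1$, so the fibre of $\mathcal F_l$ is also connected and $\pi_l^\ast(b)$ is the nonzero element; hence $\pi_k^\ast(a)=\pi_l^\ast(b)$ in $H^1(M_0;\mathbb Z_2)$. Feeding this into the $\mathbb Z_2$-Mayer--Vietoris sequence for $U=f^{-1}[-1,\tfrac12)\simeq\mathbb{RP}^k$, $V=f^{-1}(-\tfrac12,1]\simeq\mathbb{RP}^l$, $U\cap V\simeq M_0$ yields $H^1(M;\mathbb Z_2)\cong\mathbb Z_2$ with generator $w$ restricting to the generators of $H^1(\mathbb{RP}^k;\mathbb Z_2)$ and $H^1(\mathbb{RP}^l;\mathbb Z_2)$. (The degenerate cases $k=0$ and codimension-one critical submanifolds must be handled separately: the homotopy exact sequences of $\mathcal F_k,\mathcal F_l$ force $M_0\cong S^{d-1}$, and then $d=l+1$, with $l=1$ if $k=0$; in these cases the required $w$ exists for the elementary reason that $\pi_l^\ast$ is either injective or zero.)

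Let $p\colon\widehat M\to M$ be the connected double cover classified by $w$, with deck involution $\tau$, and put $\widehat f=f\circ p$. Since $p$ is a local diffeomorphism, $\widehat f$ is Morse--Bott with $\mathrm{Cr}(\widehat f)=p^{-1}(\mathbb{RP}^k)\sqcup p^{-1}(\mathbb{RP}^l)$; because $w$ restricts nontrivially on each factor, $p^{-1}(\mathbb{RP}^k)$ is the connected double cover of $\mathbb{RP}^k$, namely $S^k$ (a genuine two-point set $S^0$ when $k=0$), and likewise $p^{-1}(\mathbb{RP}^l)=S^l$. Thus $\widehat f$ is a Morse--Bott function on the closed manifold $\widehat M$ with critical submanifolds $S^k$ and $S^l$, $k\neq l$, so \autoref{thm:forSpheres} (in the form of \autoref{eg:ht-square-v2} when $k=0$) gives $d=\dim\widehat M=k+l+1$ and a homeomorphism $\widehat M\cong S^{k+l+1}$.

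Finally, since $k+l+1\ge 2$ the sphere $S^{k+l+1}$ is simply connected, so $\widehat M$ is the universal cover of $M$; hence $\pi_1(M)\cong\mathbb Z/2$ and $M\cong S^{k+l+1}/\langle\tau\rangle$ is a quotient of the standard sphere by a free involution. It remains to identify the homotopy type: a closed manifold $X^n$ with $\pi_1(X)\cong\mathbb Z/2$ and universal cover $S^n$ is homotopy equivalent to $\mathbb{RP}^n$. One builds a map $g\colon\mathbb{RP}^n\to M$ skeleton by skeleton (the obstructions and ambiguities lie in $\pi_{i-1}(M)=\pi_{i-1}(S^n)$ and $\pi_i(M)=\pi_i(S^n)$, which vanish for $2\le i\le n-1$, so only the top cell carries a choice), observes that $g$ is an isomorphism on $\pi_1$ and that $H^\bullet(M;\mathbb Z_2)\cong\mathbb Z_2[t]/(t^{n+1})$ and $H_\bullet(M;\mathbb Z)\cong H_\bullet(\mathbb{RP}^n;\mathbb Z)$ by the $\mathbb Z_2$- and $\mathbb Z$-Serre spectral sequences of the Borel fibration $S^n\to M\to\mathbb{RP}^\infty$; together with the freedom (when $n$ is odd) to adjust $g$ over the top cell so that the induced self-map of $S^n$ has degree one, this makes $g$ a homology isomorphism, hence a homotopy equivalence by Whitehead's theorem. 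This yields $M\simeq\mathbb{RP}^{k+l+1}$, completing the proof.

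I expect the manufacturing of the double cover — proving that $H^1(M;\mathbb Z_2)$ actually contains a class restricting to the generator over \emph{both} critical submanifolds — to be the genuine obstacle: this is precisely the point where the hypothesis that the critical submanifolds are real projective spaces is used (for other targets, e.g.\ lens spaces, the analogous $H^1$ can vanish, which is exactly why the argument fails there), and it requires the spectral-sequence bookkeeping for $\mathcal F_k,\mathcal F_l$ and a separate treatment of the low-dimensional and codimension-one exceptional cases. The concluding identification $S^n/(\mathbb Z/2)\simeq\mathbb{RP}^n$ is a standard fact about homotopy real projective spaces and may instead simply be cited.
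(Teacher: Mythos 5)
Your proposal is correct and, at its core, runs along the same lines as the paper's proof: both arguments pass to a connected double cover of $M$ on which $f$ pulls back to a Morse--Bott function with critical set $S^k\sqcup S^l$, apply \autoref{thm:forSpheres} (resp.\ \autoref{eg:ht-square-v2} when $k=0$) to identify that cover with $S^{k+l+1}$, and finish by the standard fact that the orbit space of a free involution on a sphere is homotopy equivalent to $\mathbb{RP}^{k+l+1}$ (cited as Lemma 3 of \cite{HirMil64}). The difference is organizational but real: the paper first proves $d=k+l+1$ and $\pi_1(M)\cong\mathbb{Z}_2$ directly (\autoref{step-1RPn}, via the homotopy exact sequences of $\mathcal{F}_k,\mathcal{F}_l$ and Seifert--van Kampen) and then shows, by a fundamental-group diagram chase, that both critical submanifolds lift to spheres in the universal cover (\autoref{step-2RPn}); you instead manufacture the cover from a class $w\in H^1(M;\mathbb{Z}_2)$ restricting nontrivially to both $\mathbb{RP}^k$ and $\mathbb{RP}^l$ (a mod $2$ Mayer--Vietoris argument dual to the paper's $\pi_1$ computation), and you let \autoref{thm:forSpheres} deliver the dimension and the simple connectivity of the cover, deducing $\pi_1(M)\cong\mathbb{Z}_2$ only afterwards. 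Your route thus dispenses with the paper's separate dimension lemma, at the cost of essentially the same fibration bookkeeping needed to rule out the low-codimension cases ($d\le k+2$ and $d=l+1$ for $k\ge 1$) that make the $H^1$ computation go through. Two small points: your parenthetical on the degenerate cases is garbled --- for $k=0$ one gets $d=l+1$ for every $l$ (there is no forcing of $l=1$), though the needed class $w$ does exist there and the double-cover argument (or, as in the paper, a direct identification of $M$ with $\mathbb{RP}^{l+1}$) still works; and your closing obstruction-theoretic re-proof of the homotopy classification of free involutions on spheres is unnecessary, as the paper simply cites it, which you also allow.
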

 \begin{proof}
 We may assume that 
 \[
    f^{-1} (-1) = \mathbb{RP} ^k,\ f^{-1} (1) = \mathbb{RP} ^l, \text{ and } f^{-1} (0) \eqqcolon M_0.
 \]
 Then, we have the following fibrations: 
 \[
        \left.
            \begin{tikzcd}
                S ^{d-k-1} \arrow[r,hook] & M_0 \arrow[d]\\
                                                & \mathbb{RP} ^k
            \end{tikzcd}
            \kern 0.2cm 
        \right\} \mathcal{F}_k
        \kern 1cm
        \left.
            \begin{tikzcd}
                S ^{d-l-1} \arrow[r,hook] & M_0 \arrow[d]\\
                                               & \mathbb{RP} ^l
            \end{tikzcd}
            \kern 0.2cm 
        \right\} \mathcal{F}_l
    \]
 The case $k=0$ can be handled exactly as in $k=0$ case in the proof of \autoref{step-2CPn}. In this case, $M$ will be homeomorphic to $\mathbb{RP}^{l+1}$. We shall assume that $k\geq 1$ and complete the proof, assuming the following steps. 
 \begin{enumerate}[(1)]
    \item[] {\bf Step 1:} $M$ is of dimension $k + l + 1$ and $\pi_1(M)=\mathbb{Z}_2$ (\autoref{step-1RPn}). 
    \item[] {\bf Step 2:} The universal cover of $M$ is $S^{k+l+1}$ and the mod $2$ cohomology ring of $M$ is isomorphic to $\mathbb{Z} _2 [\alpha ]/(\alpha ^{k+l+2})$, where $\vert \alpha \vert = 1$ (\autoref{step-2RPn}).  
 \end{enumerate}
 Due to Step 2, there exists a fixed point free involution $T:S^{k+l+1}\to S^{k+l+1}$ such that $M$ arises as the orbit space of $S^{k+l+1}$ under this map. It is well-known (see Lemma 3 of \cite{HirMil64}, for instance) that such an orbit space is homotopy equivalent to $\mathbb{RP}^{k+l+1}$.  
    \end{proof}

\begin{lemma}\label{step-1RPn}
    With the conditions as in \autoref{thm:forRealProjective} and $k\geq 1$, the dimension of $M$ is $k+l+1$ and $\pi_1(M)=\mathbb{Z}_2$.
\end{lemma}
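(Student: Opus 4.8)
The plan is to read off the dimension from the $\mathbb{Z}_2$-cohomology of the common level set $M_0=f^{-1}(0)$, a closed manifold of dimension $d-1$ that carries the two sphere-bundle structures $\mathcal{F}_k\colon S^{d-k-1}\hookrightarrow M_0\to\mathbb{RP}^k$ and $\mathcal{F}_l\colon S^{d-l-1}\hookrightarrow M_0\to\mathbb{RP}^l$. First I would record the routine reductions: $M$ is connected, since each component either is a critical submanifold or contains at least two of them and there are only two; and, arguing exactly as in the proof of \autoref{thm:forSpheres}, $d>l$, so both fibre spheres are non-empty and, the fibre $S^{d-k-1}$ of $\mathcal{F}_k$ having positive dimension, $M_0$ is connected.

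The heart of the argument is a computation of $B:=\dim_{\mathbb{Z}_2}H^{\bullet}(M_0;\mathbb{Z}_2)$ from the two fibrations. Run the mod-$2$ Serre spectral sequence of $\mathcal{F}_j$ for $j\in\{k,l\}$. Over $\mathbb{Z}_2$ the fibre cohomology $H^{\bullet}(S^{d-j-1};\mathbb{Z}_2)$ sits in degrees $0$ and $d-j-1$ with trivial $\pi_1$-action (as $\mathrm{Aut}(\mathbb{Z}_2)$ is trivial), so $E_2\cong (\mathbb{Z}_2[t_j]/(t_j^{j+1}))\otimes\Lambda(x_j)$ with $|t_j|=1$, $|x_j|=d-j-1$, and the only possibly non-zero differential is the transgression $d_{d-j}(x_j)\in H^{d-j}(\mathbb{RP}^j;\mathbb{Z}_2)$. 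If it vanishes — which is automatic when $d-j>j$, since then $H^{d-j}(\mathbb{RP}^j;\mathbb{Z}_2)=0$ — then $E_{\infty}=E_2$ and $B=2(j+1)$; if it is non-zero, which forces $d-j\le j$, then using that the transgression is a derivation a short bookkeeping on $E_{\infty}$ gives $B=2(d-j)$. Hence $B/2\in\{k+1,\,d-k\}$ from $\mathcal{F}_k$ and $B/2\in\{l+1,\,d-l\}$ from $\mathcal{F}_l$, where the alternatives $B/2=d-k$ and $B/2=d-l$ can occur only when $d-k\le k$, respectively $d-l\le l$. Comparing the two descriptions: $k+1=l+1$ and $d-k=d-l$ are impossible as $k<l$; $d-k=l+1$ is impossible because it would require both $d-k\le k$ and $d-k=l+1>k$ (using $l>k$); so the only surviving match is $k+1=d-l$, i.e.\ $d=k+l+1$. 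The degenerate case $d=l+1$, where $M_0$ is the connected double cover $S^l$ and $B=2$, is ruled out directly, since $1\in\{k+1,d-k\}$ would force $k=0$ or $k=l$; so in fact $d\ge l+2$ and the spectral-sequence analysis of $\mathcal{F}_l$ is legitimate.

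For $\pi_1(M)$: now that $d=k+l+1$, the fibre of $\mathcal{F}_k$ is $S^{l}$ with $l\ge 2$, so its long exact homotopy sequence shows that the bundle projection induces an isomorphism $\pi_1(M_0)\xrightarrow{\ \cong\ }\pi_1(\mathbb{RP}^k)$. I would then apply the Seifert--van Kampen theorem to $U=f^{-1}[-1,1/2)$ and $V=f^{-1}(-1/2,1]$, where $U\simeq\mathbb{RP}^k$, $V\simeq\mathbb{RP}^l$, $U\cap V\simeq M_0$, the deformation retractions being (homotopic to) the normalized negative-gradient flow, so that $M_0\hookrightarrow U\to\mathbb{RP}^k$ is precisely the projection of $\mathcal{F}_k$. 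Since $\pi_1(M_0)\to\pi_1(U)$ is then an isomorphism, the amalgam $\pi_1(M)\cong\pi_1(\mathbb{RP}^k)\ast_{\pi_1(M_0)}\pi_1(\mathbb{RP}^l)$ collapses onto $\pi_1(\mathbb{RP}^l)\cong\mathbb{Z}_2$. When $k=1$ one instead has $\pi_1(M_0)\cong\pi_1(\mathbb{RP}^1)\cong\mathbb{Z}$, but the fibre $S^{l}$ of $\mathcal{F}_k$ is still simply connected, so $\pi_1(M_0)\to\pi_1(U)$ remains an isomorphism and the same collapse applies. This gives $\pi_1(M)\cong\mathbb{Z}_2$.

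The step I expect to be most delicate is not the main line above but the bookkeeping around it: verifying $B=2(d-j)$ in the transgression-non-zero range, and genuinely excluding all low-dimensional and degenerate configurations ($d\le l+1$, or a $0$-dimensional or disconnected fibre sphere). A parallel route to $d=k+l+1$ through the long exact homotopy sequences of $\mathcal{F}_k$ and $\mathcal{F}_l$ — mirroring the proof of \autoref{M-dim} and using $\pi_j(\mathbb{RP}^m)\cong\pi_j(S^m)$ for $j\ge 2$ — is available, but it requires considerably more case analysis, so I would prefer the $\mathbb{Z}_2$-cohomological argument.
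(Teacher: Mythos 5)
Your proposal is correct, and the interesting difference lies in the dimension count. The paper's proof of \autoref{step-1RPn} mirrors \autoref{M-dim}: it runs the long exact homotopy sequences of $\mathcal{F}_k$ and $\mathcal{F}_l$, first excluding $d=l+1$ and $d-k\leq k$, then uses the resulting section of $\mathcal{F}_k$ to split $\pi_j(M_0)\cong\pi_j(\mathbb{RP}^k)\oplus\pi_j(S^{d-k-1})$, and invokes Serre's theorem (a copy of $\mathbb{Z}$ in $\pi_k(S^{d-l-1})$ only when $k=d-l-1$, or $k=2(d-l-1)-1$ with $d-l-1$ even), eliminating the second case by comparing $\pi_{d-l-1}(M_0)$ computed from the two fibrations. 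You instead compute the total mod $2$ Betti number $B$ of $M_0$ from the mod $2$ Serre spectral sequences of the two sphere bundles: with fibre cohomology equal to $\mathbb{Z}_2$ in two degrees the only differential is the transgression, so $B/2\in\{k+1,\,d-k\}$ and $B/2\in\{l+1,\,d-l\}$, the second alternative in each case being available only when the codimension does not exceed the base dimension, and matching forces $k+1=d-l$. Your bookkeeping is right ($B=2(d-j)$ when the transgression is non-zero), the $\mathbb{Z}_2$ coefficients correctly dispose of the local-coefficient issue over $\mathbb{RP}^j$, and you correctly isolate and kill the one degenerate configuration ($S^0$ fibre, i.e.\ $d=l+1$) via the connected double cover. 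What your route buys is independence from Serre's finiteness theorem and from the homotopy-group case analysis; what the paper's route buys is the splitting of $\pi_j(M_0)$ as a by-product, which it then reuses for the fundamental group. Your $\pi_1$ computation is essentially the paper's: Seifert--van Kampen on $U\simeq\mathbb{RP}^k$, $V\simeq\mathbb{RP}^l$ with $\pi_1(M_0)\to\pi_1(U)$ an isomorphism, collapsing the amalgam onto $\pi_1(\mathbb{RP}^l)\cong\mathbb{Z}_2$; the only (harmless) variation is that you deduce the isomorphism from simple connectivity of the fibre $S^l$, available once $d=k+l+1$ is known, rather than from the section of $\mathcal{F}_k$.
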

    
\begin{proof}
If $d=l+1$, then $\mathcal{F}_k$ implies that $M_0$ is connected. Using $\mathcal{F}_l$, $M_0$ is a connected double cover of $\mathbb{RP}^l$, whence it must be $S^l$. The long exact sequence in homotopy groups associated with $\mathcal{F}_k$ now implies that $\pi_1(\mathbb{RP}^k)=0$. This contradiction establishes that $d>l+1$. 
If we assume that $d-k\leq k$, then $$1< d-l<d-k\leq k<l.$$ This forces $M_0$ to be connected and $\pi_{d-l}(\mathbb{RP}^l)\cong\pi_{d-l}(S^l)=0$. From the long exact sequence associated to $\mathcal{F}_l$ we obtain $$\mathbb{Z}\cong\pi_{d-l-1}(S^{d-l-1})\leq \pi_{d-l-1}(M_0).$$ From the long exact sequence associated to $\mathcal{F}_k$, we obtain 
\[\pi_{d-l-1}(M_0)\leq \pi_{d-l-1}(\mathbb{RP}^k)\cong\left\{\begin{array}{rl}
\mathbb{Z}_2 & \textup{if $d-l-1=1$}\\
\pi_{d-l-1}(S^k)=0 & \textup{if $d-l-1>1$}.
\end{array}\right.
\]
This contradiction establishes that $d-k>k$. This implies that $\mathcal{F}_k$ admits a section and that $M_0$ is connected. Thus, 
\begin{equation}
    \pi_j(M_0)\cong\pi_j(\mathbb{RP}^k)\oplus\pi_j(S^{d-k-1})\label{M_0RPk}
\end{equation}
and $\pi_k(M_0)$ is free abelian of rank $2$ or $1$ if $d-k-1=k$ or $d-k-1>k$ respectively. The long exact sequence associated to $\mathcal{F}_l$ gives 
\begin{equation}
    \cdots\longrightarrow \pi_{j+1}(\mathbb{RP}^l)\longrightarrow \pi_j(S^{d-l-1})\longrightarrow \pi_j(M_0)\longrightarrow \pi_j(\mathbb{RP}^l)\longrightarrow \cdots \label{M_0RPl}
\end{equation}
With $1\leq k<l$, $\pi_k(\mathbb{RP}^l)$ is either zero or $\mathbb{Z}_2$, and $\pi_{k+1}(\mathbb{RP}^l)$ is either zero or $\mathbb{Z}$. As $\pi_k(S^{d-l-1})$ contains at most one $\mathbb{Z}$, we must have $d-k-1>k$, $\pi_k(M_0)\cong\mathbb{Z}$ and $\mathbb{Z}\leq \pi_k(S^{d-l-1})$. This forces one of two cases:\\
\hspace*{0.5cm}(i) $k=d-l-1$;\\
\hspace*{0.5cm}(ii) $d-l-1$ is even and $k=2(d-l-1)-1$.\\
In case (ii), $l>d-l$ and we write out \eqref{M_0RPl} for $j=d-l-1$:
\begin{center}
        \begin{tikzcd}[column sep= 2em]
            \cdots \arrow[r] & \cancelto{0}{\pi_{d-l}(\mathbb{RP}^l)} \arrow[r] & \pi_{d-l-1}(S^{d-l-1}) \arrow[r] & \pi_{d-l-1}(M_0) \arrow[r] & \cancelto{0}{\pi_{d-l-1}(\mathbb{RP}^l)} \arrow[r] & \cdots . 
        \end{tikzcd}
    \end{center}
However, by \eqref{M_0RPk} for $j=d-l-1$, we get
$$\pi_{d-l-1}(M_0)\cong \pi_{d-l-1}(\mathbb{RP}^k)\cong\pi_{d-l-1}(S^k)=0.$$
This contradiction implies that case (i), i.e., $d=k+l+1$ as the only possibility.\\
\hspace*{0.5cm}We consider the open cover $U=f^{-1}[-1,1/2)$ and $V=f^{-1}(-1/2,1]$ of $M$. Te inclusion $i^U:M_0\hookrightarrow  U$ induces an isomorphism 
\begin{equation*}\label{pi1_RPk}
    i^U_\ast:\pi_1(M_0)\to \pi_1(U)\cong\pi_1(\mathbb{RP}^k)
\end{equation*}
due to \eqref{M_0RPk} and $U$ deforming to $\mathbb{RP}^k$. It follows from Seifert-van Kampen Theorem that the inclusion $\iota:V\hookrightarrow M$ induces an isomorphism
\begin{equation}\label{pi1RPl}
    \iota_\ast:\pi_1(V)\stackrel{\cong}{\longrightarrow}\pi_1(M).
\end{equation} 
As $V$ deforms to $\mathbb{RP}^l$, the claim follows.
\end{proof}
\begin{lemma}\label{step-2RPn}
    With the conditions as in \autoref{thm:forRealProjective} and $k\geq 1$, The universal cover of $M$ is $S^{k+l+1}$ and the mod $2$ cohomology ring of $M$ is isomorphic to $\mathbb{Z} _2 [\alpha ]/(\alpha ^{d+1})$, where $\vert \alpha \vert = 1$
\end{lemma}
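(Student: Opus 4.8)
The plan is to lift the whole configuration to the universal cover and then apply \autoref{thm:forSpheres}; this is the promised use of Theorem~A in the real case. By \autoref{step-1RPn} we already know $d = k+l+1$ and $\pi_1(M)\cong\mathbb{Z}_2$, so the universal covering $p\colon\widetilde{M}\to M$ is a connected double cover of a closed smooth manifold. Hence $\widetilde{M}$ is itself closed and smooth, and the nontrivial deck transformation $T$ is a free smooth involution with $M = \widetilde{M}/T$.

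Next I would examine the pulled-back function $\widetilde{f}\defeq f\,{\scriptstyle\circ}\,p\colon\widetilde{M}\to\mathbb{R}$. Since $p$ is a local diffeomorphism, $\widetilde{f}$ is again Morse-Bott, with $\mathrm{Cr}(\widetilde{f}) = p^{-1}(\mathbb{RP}^k)\sqcup p^{-1}(\mathbb{RP}^l)$ and the normal Hessians still nondegenerate. The crucial point is that \emph{each} of these preimages is connected. The restriction of $p$ over $\mathbb{RP}^l$ is the cover classified by $\pi_1(\mathbb{RP}^l)\to\pi_1(M)$, which is an isomorphism; this is exactly what the proof of \autoref{step-1RPn} establishes, where $\iota_\ast\colon\pi_1(V)\xrightarrow{\ \cong\ }\pi_1(M)$ with $V\simeq\mathbb{RP}^l$. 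For $\mathbb{RP}^k$, the map $\pi_1(\mathbb{RP}^k)\to\pi_1(M)$ is surjective: the section of $\mathcal{F}_k$ makes $\pi_1(M_0)\to\pi_1(\mathbb{RP}^k)$ an isomorphism by \eqref{M_0RPk}, while the long exact sequence of $\mathcal{F}_l$, whose fibre is $S^{d-l-1}=S^k$, forces $\pi_1(M_0)\to\pi_1(\mathbb{RP}^l)\cong\pi_1(M)$ onto. Therefore $p^{-1}(\mathbb{RP}^k)$ and $p^{-1}(\mathbb{RP}^l)$ are the connected double covers of $\mathbb{RP}^k$ and $\mathbb{RP}^l$, i.e.\ $S^k$ and $S^l$. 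Thus $\widetilde{f}$ is a Morse-Bott function on the closed manifold $\widetilde{M}$ with exactly two connected critical submanifolds $S^k$ and $S^l$ of unequal dimension, and \autoref{thm:forSpheres} gives $\widetilde{M}\cong S^{k+l+1}$. This is the first assertion of the lemma.

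For the ring structure I would regard $p\colon\widetilde{M}\to M$ as the $S^0$-bundle $S(\ell)\to M$, where $\ell$ is the real line bundle over $M$ with $w_1(\ell)=\alpha\in H^1(M;\mathbb{Z}_2)$ classifying the cover; as $\pi_1(M)=\mathbb{Z}_2$, the class $\alpha$ is the nonzero one. Since $\widetilde{M}\cong S^d$ we have $H^j(\widetilde{M};\mathbb{Z}_2)=0$ for $0<j<d$, so the Gysin sequence
\[
\cdots\longrightarrow H^{j-1}(M;\mathbb{Z}_2)\xrightarrow{\ \cup\,\alpha\ } H^{j}(M;\mathbb{Z}_2)\xrightarrow{\ p^\ast\ } H^{j}(\widetilde{M};\mathbb{Z}_2)\longrightarrow H^{j}(M;\mathbb{Z}_2)\xrightarrow{\ \cup\,\alpha\ } H^{j+1}(M;\mathbb{Z}_2)\longrightarrow\cdots
\]
shows that $\cup\,\alpha\colon H^{j}(M;\mathbb{Z}_2)\to H^{j+1}(M;\mathbb{Z}_2)$ is an isomorphism for $0\le j\le d-1$, using also $H^0(M;\mathbb{Z}_2)=\mathbb{Z}_2$, $H^d(M;\mathbb{Z}_2)\cong\mathbb{Z}_2$ (Poincar\'e duality over $\mathbb{Z}_2$), and $H^{j}(M;\mathbb{Z}_2)=0$ for $j>d$. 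Starting from the generator $1\in H^0$, this forces $H^{j}(M;\mathbb{Z}_2)=\mathbb{Z}_2\langle\alpha^{j}\rangle$ for $0\le j\le d$ and $\alpha^{d+1}=0$, i.e.\ $H^\bullet(M;\mathbb{Z}_2)\cong\mathbb{Z}_2[\alpha]/(\alpha^{d+1})$ with $|\alpha|=1$; equivalently, one may run the Serre spectral sequence of $S^d\to M\to B\mathbb{Z}_2$ with $\mathbb{Z}_2$-coefficients.

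I expect the only genuine obstacle to be the connectivity bookkeeping in the second paragraph: if either $p^{-1}(\mathbb{RP}^k)$ or $p^{-1}(\mathbb{RP}^l)$ were disconnected, then $\widetilde{f}$ would have three or four critical submanifolds and \autoref{thm:forSpheres} would not apply, so both must be shown to be connected. This is where the low-dimensional peculiarities already met in \autoref{step-1RPn} (for instance $\pi_1(M_0)\cong\mathbb{Z}$ rather than $\mathbb{Z}_2$ when $k=1$) enter, but all the needed facts are contained in that proof. Once $\widetilde{M}\cong S^{d}$ is known, everything else is formal.
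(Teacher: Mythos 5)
Your proposal is correct and follows essentially the same route as the paper: pass to the universal double cover, check that the preimages of $\mathbb{RP}^k$ and $\mathbb{RP}^l$ are connected (hence $S^k$ and $S^l$), apply \autoref{thm:forSpheres} to the lifted Morse--Bott function $f\,{\scriptstyle\circ}\,p$, and finish with the mod $2$ Gysin sequence of the double cover. The only difference is cosmetic: you obtain surjectivity of $\pi_1(M_0)\to\pi_1(\mathbb{RP}^l)\cong\pi_1(M)$ directly from the homotopy exact sequence of $\mathcal{F}_l$ (its fibre $S^{d-l-1}=S^k$ being connected), whereas the paper argues via the restriction of $M_0$ to $\mathbb{RP}^1\subset\mathbb{RP}^l$ with a separate $k=1$ (torus/Klein bottle) case analysis; your shortcut is valid and slightly cleaner.
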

\begin{proof} 
Consider the commutative diagram 
    \begin{center}
        \begin{tikzcd}[column sep = large, row sep = large]
            M_0\big|_{\mathbb{RP}^1} & M_0 & \nu_{\mathbb{RP}^l} \\
            \mathbb{RP}^1 & \mathbb{RP}^l & \mathbb{RP}^l
            \arrow["", hook, from = 1-1, to = 1-2]
            \arrow["", hook, from = 1-2, to = 1-3]
            \arrow["i", hook, from = 2-1, to = 2-2]
            \arrow["\textup{id}", from = 2-2, to = 2-3]
            \arrow["\pi"', from = 1-1, to = 2-1]
            \arrow["\pi"', from = 1-2, to = 2-2]
            \arrow[""', from = 1-2, to = 2-2]
            \arrow["\simeq", from = 1-3, to = 2-3]
        \end{tikzcd}
    \end{center}
    Note that the restriction of $M_0$ to $\mathbb{RP}^1$ is a fibre bundle with fibre $S^k$. If $k>1$, then there is a section and $\pi$ induces an isomorphism on fundamental groups. If $k=1$, then the total space is the torus or the Klein bottle and in both cases, $\pi$ induces a surjection on fundamental groups. As $\iota:\mathbb{RP}^1\hookrightarrow \mathbb{RP}^l$ induces a surjection on $\pi_1$, and $\nu_{\mathbb{RP}^l}\subseteq V$ is a homotopy equivalence, we have the following induced diagram
    \begin{center}
        \begin{tikzcd}[column sep = large, row sep = large]
            \pi_1\big(M_0\big|_{\mathbb{RP}^1}\big) & \pi_1(M_0) & \pi_1(V) \\
             \mathbb{Z} & \mathbb{Z}_2 & \mathbb{Z}_2
            \arrow["", from = 1-1, to = 1-2]
            \arrow["i_\ast", ->>, from = 2-1, to = 2-2]
            \arrow["\pi_\ast"', ->>, from = 1-1, to = 2-1]
            \arrow["i^V_\ast", from = 1-2, to = 1-3]
            \arrow["", from = 1-2, to = 2-2]
            \arrow["\textup{id}", from = 2-2, to = 2-3]
            \arrow["\cong", from = 1-3, to = 2-3]
        \end{tikzcd}
    \end{center}
This forces the middle vertical map as well as $i^V_\ast$ to be surjective maps.\\
\hspace*{0.5cm}We consider the universal cover $p:\widetilde{M}\to M$. The generator of $\pi_1(\mathbb{RP}^l)$, being the same as that of $\pi_1(M)$ due to \eqref{pi1RPl}, is lifted to a path in $\widetilde{M}$. This implies that $p^{-1}(\mathbb{RP}^l)=S^l$. As the generator of $\pi_1(\mathbb{RP}^k)$ maps to the generator of $\pi_1(\mathbb{RP}^l)$ (due to $i_\ast$ being surjective in the above diagram), by homotopy lifting, it also lifts to a path. Thus, $p^{-1}(\mathbb{RP}^k)\cong S^k$. Therefore, $\widetilde{M}$ is a closed, connected manifold equipped with a Morse-Bott function $f\,{\scriptstyle\circ}\, p$ with two critical submanifolds $S^k$ and $S^l$. \autoref{thm:forSpheres} implies that $\widetilde{M}$ is homeomorphic to $S^{k+l+1}$. The Gysin sequence in cohomology with mod $2$ coefficients now imply that $H^\bullet(M;\mathbb{Z}_2)\cong \mathbb{Z}_2[\alpha]/(\alpha^{d+1})$, where $|\alpha|=1$.
\end{proof}

\begin{remark}
    We have seen that (refer \eqref{eq:cohomologyRingOfM0}) that $M_0$ has the same cohomology ring as that of $\mathbb{CP}^k \times S^{2l+1}$. In \autoref{eg:morseBottFunctionOnCP}, $\nu_{\mathbb{CP}^k}=(l+1)\gamma^\ast$, where $\gamma$ is the tautological (complex) line bundle over $\mathbb{CP}^k$. Consider the projection map $p:S^{2k+1}\times \mathbb{C}\to S^{2k+1}$. Equip the the codomain with the standard $S^1$-action and the domain with the diagonal $S^1$-action. The induced map 
    $$\tilde{p}:S^{2k+1}\!\times_{S^1}\! \mathbb{C}\to \mathbb{CP}^k$$
    defines a complex line bundle. The map
    $$\varphi:S^{2k+1}\!\times_{S^1}\! \mathbb{C}\to \gamma^\ast,\,\,[(v,\lambda)]\mapsto (zv\mapsto z\lambda)$$
    defines an isomorphism of complex line bundles over $\mathbb{CP}^k$. This implies an isomorphism of bundles 
    $$\varphi:S^{2k+1}\!\times_{S^1}\! \mathbb{C}^r\stackrel{\cong}{\longrightarrow} r\gamma^\ast$$
    and consequently, $M_0\cong S(\nu_{\mathbb{CP}^k})\cong S^{2k+1}\!\times_{S^1}\! S^{2l+1}$. It is natural to ask whether $M_0$, in the general case, is always homeomorphic (or even homotopic) to $S^{2k+1}\!\times_{S^1}\! S^{2l+1}$.\\
    \hspace*{0.5cm} For the real case, in the last part of \autoref{eg:morseBottFunctionOnCP}, for the standard Morse-Bott function on $\mathbb{RP}^{k+l+1}$, we infer that $M_0$ is $S^{k}\!\times_{\mathbb{Z}_2}\! S^{l}$. It is natural to ask whether $M_0$, in the general case, is always homeomorphic (or even homotopic) to $S^{k}\!\times_{\mathbb{Z}_2}\! S^{l}$.
\end{remark}

\section*{Acknowledgement} 
We are greatly inspired and influenced, like so many before us, by the seminal paper \cite{Mil56} of John Milnor on exotic $7$-spheres. The title of this article is our humble tribute to this great scholar and his beautiful mathematical creations. Part of this work was done while the second named author was a joint postdoctoral fellow at Jilin University and G\"{o}ttingen University. The author acknowledges the funding support provided by Jilin University.

\bibliographystyle{alphaurl}

\end{document}